\newcommand{\Pic}{\mathrm{Pic}}
\newcommand{\Mov}{\mathrm{Mov}}
\newcommand{\PP}{\mathbb{P}}
\newcommand{\FF}{\mathbb{F}}
\newcommand{\EFF}{\overline{\mathrm{Eff}}}
\theoremstyle{definition}
\newtheorem{defn}{Definition}[section]
\newtheorem*{defin}{Definition}
\newtheorem{ex}[defn]{Example}
\newtheorem{rmk}[defn]{Remark}
\theoremstyle{plain}
\newtheorem{thm}[defn]{Theorem}
\newtheorem{lem}[defn]{Lemma}
\newtheorem{cor}[defn]{Corollary}
\newtheorem{prop}[defn]{Proposition}
\newtheorem{conj}[defn]{Conjecture}
\newtheorem*{thm1}{Theorem A}
\newtheorem*{thm2}{Theorem B}
\newtheorem*{cor3}{Corollary C}
\begin{document}
\title{On the birational geometry of Hilbert schemes of points and Severi divisors}
\author{C\'esar Lozano Huerta}
\author{Tim Ryan}

\address{Universidad Nacional Aut\'onoma de M\'exico\\
Instituto de Matem\'aticas \\
Oaxaca, Mex.}
\email{lozano@im.unam.mx} 

\address{Stony Brook University\\
Department of mathematics \\
NY, USA.}
\email{timothy.ryan@stonybrook.edu} 
\keywords{Hilbert Scheme, Movable Cone, Stable Base Locus Decomposition, Severi Divisor}

\subjclass[2010]{14E30 (Primary); 14E05, 14D22 (Secondary)}


\begin{abstract}
We study the birational geometry of Hilbert schemes of points on non-minimal surfaces. In particular,  we study the weak Lefschetz Principle in the context of birational geometry. We focus on the interaction of the stable base locus decomposition (SBLD) of the cones of effective divisors of $X^{[n]}$ and $Y^{[n]}$, when there is a birational morphism $f:X\rightarrow Y$ between surfaces. In this setting, $N^1(Y^{[n]})$ embeds in $N^1(X^{[n]})$, and we ask if the restriction of the stable base locus decomposition of $N^1(X^{[n]})$ yields the respective decomposition in $N^1(Y^{[n]})$ $i.e.$, if the weak Lefschetz Principle holds. Even though the stable base loci in $N^1(X^{[n]})$ fails to provide information about how the two decompositions interact, we show that the restriction of the augmented stable base loci of $X^{[n]}$ to $Y^{[n]}$ is equal to the stable base locus decomposition of $Y^{[n]}$. We also exhibit effective divisors induced by Severi varieties. We compute the classes of such divisors and observe that in the case that $X$ is the projective plane,  these divisors yield walls of the SBLD for some cases.
\end{abstract}

\maketitle

\section{Introduction} 

\noindent
\noindent
The study of how points on surfaces move and form families has driven research in algebraic geometry for a long time. 
The parameter space of the configurations of points on a surface, called the Hilbert scheme of points, has been extensively studied. In fact, the birational geometry of this Hilbert scheme has recently had advances coming from Bridgeland stability, derived categories, and interpolation for vector bundles. Using these techniques, questions about the cone of effective divisors or the stable base locus decomposition (SBLD) have satisfactory answers for some interesting cases, e.g. \cite{BM, ABCH13, CHW, LZ1, Nuer2}. However, it remains a difficult problem to determine the effective cone or the SBLD of the Hilbert scheme of points of most surfaces, even in the case of rational surfaces. The present work studies the cone of effective divisors and the SBLD using two approaches: via the augmented base loci and via Severi varieties. We work over the field of complex numbers.

\medskip\noindent
Suppose $X$ is a smooth surface such that the effective cone or the stable base locus decomposition of the Hilbert scheme of points $X^{[n]}$ is known. It is desirable to know how these objects behave on the blowup of $X$ (or the blow down). This is the first topic we investigate. We show that the effective cone as well as the stable base locus decomposition of $N^1(X^{[n]})$ behave according to the weak Lefschetz Principle. Let us be more precise.

\medskip\noindent
Suppose $X$ and $Y$ are smooth surfaces related by a birational morphism $f:X\rightarrow Y$.
We aim to understand how the effective cone and the SBLD of the Hilbert schemes $X^{[n]}$ and $Y^{[n]}$ interact. The morphism $f$ induces a rational contraction, $F:X^{[n]}\dasharrow Y^{[n]}$, in the sense of \cite{HK00}. Most importantly, the pullback $F^*$ induces an embedding of the N\'eron-Severi groups 
$$F^*:N^1(Y^{[n]})\hookrightarrow N^1(X^{[n]}).$$
In this context, the weak Lefschetz Principle says that it should be possible to determine the effective cone and the stable base locus decomposition of $N^1(Y^{[n]})$ by restricting those of $N^1(X^{[n]})$.

\medskip\noindent
In Proposition \ref{EFF}, we show that this is the case for the cone of effective divisors: the cone $\EFF(X^{[n]})$, when restricted to the image of $F^*$, yields the effective cone $\EFF(Y^{[n]})$. However the behavior of the stable base locus is more subtle. For one thing, the image of $F^*$ (inside $N^1(X^{[n]})$) is fully contained in a wall of the stable base locus decomposition. Hence, the restriction of walls provides no information. However, we analyze the base locus of a divisor perturbed by a small multiple of an ample divisor, which is called the augmented base locus. The stable base locus is constant across a wall, but the augmented one is not. It turns out that the stable base locus decomposition of $Y^{[n]}$ can be recovered from the augmented base locus decomposition of $X^{[n]}$ in good cases. Equivalently in such cases, we can say that the stable base locus decomposition of $Y^{[n]}$ gives a slice of the augmented base locus decomposition of $X^{[n]}$. Intuitively,  this is the content of our first main result which we now state. We refer the reader to Theorem \ref{MTheo} for the precise details. 

\begin{thm1} Suppose the morphism $f:X \to Y$ is a series of blow ups at general points. Then, the linear augmented stable base locus decomposition of $\EFF(X^{[n]})$ when restricted to the image $F^*(N^1(Y^{[n]}))$ is equal to the linear stable base locus decomposition of $\EFF(Y^{[n]})$.
\end{thm1}

\medskip \noindent

\medskip\noindent
In order to prove Theorem \ref{MTheo}, we first show that the restriction of the effective cone of divisors of $X^{[n]}$ to $Y^{[n]}$ is the effective cone (Proposition \ref{EFF}). We then use the augmented base loci to further decompose the walls of the SBLD of $X^{[n]}$. If we denote the linear part of the (augmented) stable base locus decomposition of $\EFF(X^{[n]})$ by $\Delta_{X^{[n]}}$ (Definitions \ref{Delta}, \ref{Delta2}), then we may state Theorem \ref{MTheo} as follows $$\Delta_{X^{[n]}}\vert_{V}=F^*\Delta_{Y^{[n]}},$$ where $V=F^*\left(N^1(Y^{[n]})\right)$.

\medskip\noindent
Recently, the weak Lefschetz Principle has been studied in the context of birational geometry \cite{HLW01, Jow11,LHM, Ottem, Oka16}. In our present context, Theorem A says that even though the weak Lefschetz Principle fails for the stable base locus, it holds for the augmented stable base locus. As far as we know, this is the first time this principle has been studied in these terms.

\medskip\noindent
A consequence of the previous result is that if $\EFF(X^{[n]})$ has a finite polyhedral stable base locus decomposition, then the stable base locus decomposition of $\EFF(Y^{[n]})$ is also finite polyhedral, Corollary \ref{imageMDS}. In \cite{Oka16}, Okawa shows that the image of a Mori dream space under a surjective morphism is a Mori dream space. Actually, Okawa shows more: in this case, the weak Lefschetz Principle holds for the strong Mori equivalence, which identifies Mori equivalent line bundles and refines the stable base locus decomposition. In this case, we are looking at a particular case beyond these theorems where the map is a rational contraction rather than a surjective morphism.

\medskip\noindent
We apply Theorem \ref{MTheo} to investigate the behavior of the stable base loci in the case of Hirzebruch surfaces $\mathbb{F}_{r}$. In Proposition \ref{BLDHirzebruch} we describe how walls in $\EFF(\mathbb{F}_{r+1}^{[n]})$ induce walls in $\EFF(\mathbb{F}_{r}^{[n]})$ even though there is no morphism between $\mathbb{F}_{r+1}$ and $\mathbb{F}_{r}$.

\medskip\noindent
While Theorem A may provide information about some extremal rays of $\EFF(X^{[n]})$ given $X$'s minimal models, we also want to investigate effective divisors which are not (necessarily) induced by surfaces birational to $X$. This would allow us to know a bigger region of the effective cone. To do this, let us exhibit effective divisors on $X^{[n]}$ coming from Severi varieties.

\medskip\noindent
Let us consider a line bundle $\mathcal{L}$ on the smooth surface $X$ such that the linear system $|\mathcal{L}|$ is not empty. Let $n$ be an integer $0\le n \le p_a(C)$, where $C\in |\mathcal{L}|$ and $p_a$ stands for the arithmetic genus. Let $V_{n}(\mathcal{L})$ be the Severi variety, which generically parametrizes irreducible curves in $|\mathcal{L}|$ with exactly $n$ nodes and no other singularities. This variety $V_{n}(\mathcal{L})$ is a locally closed subscheme of the projective space $|\mathcal{L}|$, and we will study its image under the rational map $f : V_{n}(\mathcal{L}) \rightarrow X^{[n]}$ which sends a curve to the scheme supported at its nodes.

\begin{defin}
Let $f$ be the forgetful map $f:V_{n}(\mathcal{L})\rightarrow X^{[n]}$, which sends a curve to its nodes. We define the \textit{Severi locus}, $\mathrm{Sev}(n,\mathcal{L})$, as the closure of the image of the forgetful map $f$, $$\mathrm{Sev}(n,\mathcal{L})=\overline{\mbox{Im}(f)}.$$ When this locus is not empty and has codimension $1$, we call it a \textit{Severi divisor}.
\end{defin}

\medskip\noindent
Whenever we have a Severi divisor, our second main result Theorem B computes its divisor class in $\mathrm{N}^1(X^{[n]})$. Let us state this result and refer the reader to Theorem \ref{SEVgen} for the precise details. We need the following notation: if $D\subset X$ is a reduced effective divisor, then $D[n]\subset X^{[n]}$ denotes the divisor which generically parametrizes subschemes whose support intersects $D$. 

\begin{thm2}\label{SEVgen}
Let $C\subset X$ be a curve contained in a smooth projective surface with $h^1(\mathcal{O}_X)=0$. Assume the Severi variety $V_{n}(|C|)$ has the expected dimension, generically parametrizes irreducible curves with $n$ nodes, and the class $K_X+3C$ is effective. Then the class of the Severi divisor is  \[ \mathrm{Sev}(n,|C|)=(K_X+3C)[n]-\tfrac{5}{2}B[n],\] as long as $\mbox{dim }|C|=3n-1$, or it is empty. 
\end{thm2}

\medskip\noindent
Note if the Severi variety is reducible, it generically parametrizes irreducible curves means that statement is true for the generic point of each component.
As a consequence of the previous result, and extending work of Arbarello and Cornalba \cite{AC81}, we get that the forgetful map $f$ is finite.

\begin{cor3}
Under the assumptions of Theorem B, the forgetful map, which sends a nodal curve to the subscheme supported at its nodes, $f:V_{n}(|C|)\rightarrow X^{[n]},$ is finite.
\end{cor3}

\medskip\noindent
We will compute the classes of Severi divisors, first in the case of the plane $\PP^2$ and then on any regular surface. This will demonstrate that these divisors realize walls in the stable base locus decomposition of the effective cone of $\PP^{2[n]}$ in many cases. In such cases, the base loci of Severi divisors are configurations of point that fail to impose independent conditions on $|\mathcal{O}_{\PP^2}(d)|$, for certain $d$, Corollary \ref{IMPOSING}. We also observe that Severi divisors provide some examples of divisor classes which were not known to be effective for Hirzebruch surfaces.

\subsection*{Related work}
An important part of \cite{Hui, CHW} is the explicit description of the cone of effective divisors of $\PP^{2[n]}$ (they proved more, but let us discuss only the Hilbert scheme). These papers show that the (interesting) extremal ray of $\EFF(\PP^{2[n]})$ is generated by a Brill-Noether divisor. That is, configurations of points such that there exists a section of a suitable vector bundle vanishing on them. A technical part of the papers is to show that a generic point in $\PP^{2[n]}$ is not of this type; this is called interpolation. In this paper, instead of dealing with the interpolation of higher rank vector bundles, we study configurations of points such that there exists a section of a suitable line bundle vanishing on them to second order. In other words, we focus on the family of points where sections of line bundles vanish and their first derivatives vanish as well. This yields Severi divisors.

\medskip\noindent
In the case of $X=\mathbb{P}^2$, the walls of the stable base locus decomposition correspond to walls in the Bridgeland stability manifold \cite{ABCH13, BM14, LZ1, LZ2}. If the correspondence of the Bridgeland walls with the stable base locus walls holds for a minimal surface, then our results may help understanding such a correspondence for non-minimal surfaces, and moreover they suggest a specific structure of the stability manifold. Indeed, it would be interesting to see if the stability manifold $\mathrm{Stab}(S)$ (or some slices of it), for a certain surface $S$, sits as a complex submanifold of $\mathrm{Stab}(Bl_p(S))$, and furthermore, to verify whether the Bridgeland chamber decomposition on the submanifold is induced from the ambient one, following what Theorem \ref{MTheo} suggests. 

\subsection*{Organization of the paper}
Section \ref{sec: prelim} contains some preliminaries on Hilbert schemes of points on surfaces. Section \ref{SEC3} contains definitions of (augmented) stable base locus and the proof of Theorem \ref{MTheo}. We have also included here the discussion on walls in the SBLD in the case of Hirzebruch surfaces. Section \ref{SEC4} contains the discussion about Severi divisors, in particular the proof of Theorem \ref{SEVgen}. We finish the paper with examples of these divisors on Hirzebruch surfaces and $K3$ surfaces. We included an example of a Severi divisor in $\mathbb{F}_1^{[12]}$ that does not come from $\mathbb{P}^{2[12]}$.

\section*{acknowledgments} \noindent
We would like to thank Gabriel Bujokas, Izzet Coskun, Joe Harris, Rob Lazarsfeld, Cristian Mart\'inez, Alex Massarenti, Benjamin Schmidt, and Edoardo Sernesi for useful conversations about this project. Special thanks to Edoardo Sernesi for sharing with us his insights in the proof of Theorem \ref{SEVgen}. The first author is also grateful to the Dipartimento di Matematica e Fisica in Roma Tre for providing ideal working conditions where part of this work was done. We also thank the organizers of the conference II CNGA at CMO where parts of this work were carried out. During the preparation of this article the first author was partly supported by the CONACYT grant CB-2015/253061, and the second author was partially supported by the NSF grant DMS-1547145.

\medskip
\section{Preliminaries on the Hilbert Scheme of points on surfaces}
\label{sec: prelim}

\noindent
Let us begin by recalling some basic facts about Hilbert schemes of points.
For a more complete introduction to the subject see work of Nakajima \cite{Na1}.

\medskip\noindent
Let $Y$ be a smooth projective surface with $h^1(\mathcal{O}_Y)=0$. Let us denote by $\{D_1,\ldots, D_k\}$ a set of generators of the Picard group $\Pic(Y)$. 
For $n>0$, recall that the set of unordered $n$-tuples of points of $Y$ is called the $n$-th symmetric product and is denoted by $Y^{(n)}$. 
This space is the quotient of the $n$-th product $Y^n$ by the symmetric group $\mathfrak{S}_{n}$, where the action is permutation of the coordinates. 
The symmetric product is singular along the locus of tuples with a repeated point of $Y$, which naturally leads to a desire for a better moduli space of points; such a space is the Hilbert scheme.

\medskip\noindent
Observing that a subscheme $\Gamma\subset Y$ which consists of $n$ distinct points has Hilbert polynomial $n$, it induces a point of the appropriate Hilbert scheme.
Motivated by this, let $Y^{[n]}$ denote the Hilbert scheme which parametrizes subschemes of $Y$ with constant Hilbert polynomial $n$. 

\medskip\noindent
The first step in understanding the birational geometry of $Y^{[n]}$ is to understand the divisors on it.
In order to state our results, we first define a Weil divisor on $Y^{[n]}$ given a reduced divisor on $Y$.
Define $D[n]$ to be the divisor in $Y^{[n]}$ of subschemes whose support intersects a general fixed reduced curve with class $D$. 
When it is clear, we will simply write $D$ in place of $D[n]$.

\begin{thm}(Fogarty \cite{FG1})
The Hilbert scheme $Y^{[n]}$ is a smooth irreducible projective variety of dimension $2n$. 
This space $Y^{[n]}$ admits a natural morphism to the symmetric product $Y^{(n)}$ called Hilbert-Chow morphism $$h:Y^{[n]}\longrightarrow Y^{(n)}.$$
The morphism $h$ is birational and gives a crepant desingularization of $Y^{(n)}$.
Furthermore, if $h^1(\mathcal{O}_Y) =0$, then
$N^1(Y^{[n]})=\text{Pic}(Y^{[n]})\otimes \mathbb{Q}$ is spanned by $D_1[n]$, $\ldots$, $D_k[n]$, and $B$ where $B$ is the exceptional divisor of $h$.
\end{thm}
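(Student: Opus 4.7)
The plan is to establish each assertion in turn, exploiting the fact that since $[Z]\in Y^{[n]}$ parametrises a $0$-dimensional scheme, its deformation theory decomposes into local contributions at the points of $\mathrm{supp}(Z)$. First I would identify the Zariski tangent space at $[Z]$ with $\mathrm{Hom}_{\mathcal{O}_Y}(I_Z,\mathcal{O}_Z)$ and place the obstructions in $\mathrm{Ext}^1(I_Z,\mathcal{O}_Z)$. Using the local-to-global spectral sequence and $\dim Z=0$, both groups reduce to sums of local $\mathrm{Ext}$'s at each $p\in\mathrm{supp}(Z)$, where \'etale locally $\mathcal{O}_{Y,p}\cong k[[x,y]]$. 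For a finite-colength ideal $I$ in this regular $2$-dimensional local ring, Hilbert--Burch supplies a length-$2$ resolution, and a direct calculation yields $\dim\mathrm{Hom}(I,\mathcal{O}/I)=2\cdot\mathrm{length}(\mathcal{O}/I)$ together with the vanishing $\mathrm{Ext}^1(I,\mathcal{O}/I)=0$. This gives smoothness of dimension $2n$ at every closed point. Projectivity is Grothendieck's general representability theorem, and irreducibility follows by noting that the open locus $U$ of $n$ distinct points is irreducible (being $(Y^n\setminus\Delta)/\mathfrak{S}_n$) and dense in $Y^{[n]}$, since each punctual Hilbert scheme $\mathrm{Hilb}^m(k[[x,y]])$ is irreducible with curvilinear locus dense.

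Next I would construct the Hilbert--Chow morphism $h:Y^{[n]}\to Y^{(n)}$ as the cycle-class map sending $Z$ to the $0$-cycle $\sum_{p}\mathrm{length}_p(Z)\,[p]$; following Fogarty or Iversen, this is a well-defined morphism of schemes that restricts to the identity on $U$, and hence is birational. For crepancy I would argue \'etale-locally: near a cycle $\sum m_i p_i\in Y^{(n)}$ both $Y^{(n)}$ and $Y^{[n]}$ decompose as products indexed by the support, reducing the claim to showing that $\mathrm{Hilb}^m(\mathbb{A}^2)\to\mathrm{Sym}^m(\mathbb{A}^2)$ is a crepant resolution. This last fact follows from Beauville's description of $\mathrm{Hilb}^m(\mathbb{A}^2)$ as a holomorphic symplectic resolution, or equivalently from a direct computation of canonical divisors on both sides.

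For the Picard group assertion, $h$ is a proper birational morphism of normal varieties whose exceptional set is the single prime divisor $B$, so $h^*\mathrm{Pic}(Y^{(n)})$ together with $B$ generates $N^1(Y^{[n]})$ rationally. To compute $\mathrm{Pic}(Y^{(n)})\otimes\mathbb{Q}$, I would pull back along the \'etale-in-codimension-one quotient $\pi:Y^n\to Y^{(n)}$ and identify it with the $\mathfrak{S}_n$-invariant part of $\mathrm{Pic}(Y^n)\otimes\mathbb{Q}$. The hypothesis $h^1(\mathcal{O}_Y)=0$ forces $\mathrm{Pic}^0(Y)=0$, so the K\"unneth formula gives $\mathrm{Pic}(Y^n)\otimes\mathbb{Q}=\bigoplus_{i=1}^n \mathrm{Pic}(Y)\otimes\mathbb{Q}$; its $\mathfrak{S}_n$-invariants are spanned by the diagonal classes $\sum_{i=1}^n\mathrm{pr}_i^*D_j$, which descend to $D_j[n]$. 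The main obstacle is the vanishing $\mathrm{Ext}^1(I,\mathcal{O}/I)=0$ for arbitrary finite-colength ideals in $k[[x,y]]$; this is the surface-specific input (it fails in dimension $\ge 3$) and underpins both the smoothness and the crepancy computations.
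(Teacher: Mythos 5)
The paper does not prove this statement; it is quoted verbatim from Fogarty with a citation, so there is no in-text argument to compare against and your sketch has to be judged on its own terms. It contains a genuine gap at precisely the step you flag as the linchpin: the claimed vanishing $\mathrm{Ext}^1_R(I,R/I)=0$ for a finite-colength ideal $I$ in $R=k[[x,y]]$ is false. Already for $I=(x,y)$, applying $\mathrm{Hom}(-,R/I)$ to the Koszul resolution $0\to R\to R^2\to I\to 0$ identifies $\mathrm{Ext}^1(I,R/I)$ with the cokernel of the map $(R/I)^2\to R/I$, $(a,b)\mapsto -ya+xb$, which is the zero map because $x$ and $y$ annihilate $R/I=k$; hence $\mathrm{Ext}^1(I,R/I)\cong k\neq 0$. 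More generally, taking Euler characteristics along the length-one Hilbert--Burch resolution $0\to R^{k-1}\to R^k\to I\to 0$ gives $\dim\mathrm{Hom}(I,R/I)-\dim\mathrm{Ext}^1(I,R/I)=n$, so Fogarty's equality $\dim\mathrm{Hom}(I,R/I)=2n$ forces $\dim\mathrm{Ext}^1(I,R/I)=n>0$. The obstruction space is therefore nonzero at \emph{every} point of $Y^{[n]}$, and smoothness cannot be deduced from unobstructedness; this is exactly why the smoothness of Hilbert schemes of points on surfaces is a genuine theorem rather than a formal consequence of deformation theory, and why the analogous spaces are singular in dimension $\ge 3$ even though the tangent-space formula there fails for a different reason.

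The repair is the route Fogarty actually takes: prove connectedness of $Y^{[n]}$ (by induction through the nested Hilbert scheme, or via the symmetric product), prove the tangent-space identity $\dim\mathrm{Hom}(I_Z,\mathcal{O}_Z)=2n$ at every point (this is the correct surface-specific input, and you do state it), and combine these with a dimension estimate showing that the closure of the locus of reduced subschemes is the unique component of dimension $2n$ --- for which one needs $\dim\mathrm{Hilb}^m_0(k[[x,y]])\le m-1$ for punctual Hilbert schemes, a bound you are already implicitly invoking via Brian\c{c}on when you assert density of the reduced locus. Equality of tangent-space dimension with local dimension then yields smoothness, and irreducibility follows. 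The remaining parts of your sketch (the cycle-class construction of $h$, and the Picard computation via $\mathrm{Cl}(Y^{(n)})\otimes\mathbb{Q}=\bigl(\mathrm{Pic}(Y^n)\otimes\mathbb{Q}\bigr)^{\mathfrak{S}_n}$ using $\mathrm{Pic}^0(Y)=0$) are sound in outline, but the crepancy claim should be argued as a discrepancy computation for the single exceptional divisor $B$ at its generic point, where the local model is $\mathrm{Hilb}^2$ of a smooth surface, rather than by appeal to a holomorphic symplectic structure that a general regular surface does not possess.
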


\medskip\noindent
This result implies that the Weil divisors on $Y^{[n]}$ are also Cartier divisors. Thus, the Weil divisors $D_i[n]$ and $B[n]$ suffice to generate all divisors of $Y^{[n]}$ (over $\mathbb{Q}$).
For example, in case $Y=\PP^2$, let $H$ be the locus of subschemes $\Gamma \in \PP^{2[n]}$ such that $\Gamma\cap L\neq 0$, where $L\subset \PP^2$ is a fixed line. 
Also, let $B$ be the locus of non-reduced subschemes of $\PP^2$ of dimension zero and length $n$. 
Alternatively, we can describe $H$ and $B$ in terms of the Hilbert-Chow morphism: $H:=h^*\mathcal{O}(1)$ is the pullback of the ample generator of $\text{Pic}\left(\mathbb{P}^{2(n)} \right)$ and $B=Exc(h)$ is the exceptional divisor.

\medskip\noindent
Analogously to the divisor $D[n]$, define the curve $C_D[n]$ in $Y^{[n]}$ by fixing $n-1$ general points of $Y$ and then varying an $n$-th point along a fixed curve of class $D$.
For $D_i$, we will abuse notation and write $C_i[n]$. By Fogarty's theorem, we know that the space of $1$-cycles on $Y^{[n]}$ is generated by
$$N_1(Y^{[n]}) = \left<C_0[n], C_1[n],\cdots, C_k[n] \right>$$
where  $C_0[n]$ is the curve defined by fixing $n-2$ general points of $Y$, a general point of a fixed curve $D_0$, and then varying an $n$-th point along  that curve. The birational invariant we will study first is defined as the closure of the cone of divisors classes which are effective; it is denoted by $\EFF(X^{[n]})\subset N^1(X^{[n]})$.


\noindent

\medskip
\section{Stable base locus decomposition of the cone of effective divisors}\label{SEC3}
\medskip\noindent
Throughout this section we make use of the following notation. Let $f:X\rightarrow Y$ be a birational morphism between smooth surfaces with $h^1(\mathcal{O}_Y) = 0$. Then, there is an induced rational map at the level of Hilbert schemes, $F:X^{[n]}\dasharrow Y^{[n]}$.
Our first result claims that via $F$, we can recover the effective divisors of $Y^{[n]}$ from those of $X^{[n]}$. We recall from \cite{HK00} the following definition.

\begin{defn} Let $F:X\dasharrow Y$ be a dominant rational map, where $Y$ is normal and projective. We say that $F$ is a \textit{rational contraction} if there exists a resolution of $F$
\[\begin{diagram}\label{RES}
&& W && \\
& \ldTo^{q} & & \rdTo^{\tilde{F}} &\\
X &&\rDashto^{F}&& Y,
\end{diagram}\]
where $W$ is smooth projective, $q$ is birational, and for every $q$-exceptional effective divisor $E$ on $W$, we have that $$\tilde{F}_*(\mathcal{O}_W(E))=\mathcal{O}_Y.$$
\end{defn}

\noindent\medskip
With the notation as in the previous definition, this Lemma will be used in Proposition \ref{EFF}.
\medskip
\begin{lem}\label{CONT}
Let $F:X\dashrightarrow Y$ be a birational map between irreducible normal $\mathbb{Q}$-factorial varieties. Denote by $W$ a fixed resolution of $F$, then the following are equivalent,
\begin{enumerate}
\item The map $F$ is a rational contraction.
\item Any $E\subset W$ prime divisor $q$-exceptional is also $\tilde{F}$-exceptional.
\item There are two isomorphic open sets $U\subset X$ and $V\subset Y$, such that $\text{codim} (Y\backslash V)\ge 2$.
\end{enumerate}
\end{lem}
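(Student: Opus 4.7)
The plan is to prove $(1) \Leftrightarrow (2)$ and $(2) \Leftrightarrow (3)$ separately, making constant use of the function-field identifications $K(W) = K(X) = K(Y)$ coming from birationality of $q$ and $\tilde{F}$, together with the fact that (for $W$ smooth and $X,Y$ normal $\mathbb{Q}$-factorial) the exceptional loci of $q$ and $\tilde{F}$ are pure of codimension one. For $(1) \Rightarrow (2)$ I argue by contrapositive: if a prime $q$-exceptional divisor $E$ is not $\tilde{F}$-exceptional, then $D := \tilde{F}(E)$ is a prime divisor of $Y$ and $E$ is its strict transform, so the induced map of DVRs $\mathcal{O}_{Y,\eta_D} \to \mathcal{O}_{W,\eta_E}$ is an isomorphism. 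Choose an open $V_0 \subset Y$ containing $\eta_D$ and disjoint from $\tilde{F}(\mathrm{Exc}(\tilde{F}))$, so that $\tilde{F}^{-1}(V_0)$ avoids every $\tilde{F}$-exceptional divisor. For a local equation $\pi$ of $D$ on $V_0$, the pullback divisor is exactly $E$ on $\tilde{F}^{-1}(V_0)$, so $1/\pi$ is a section of $\mathcal{O}_W(E)$ on that open whose image in $\tilde{F}_*\mathcal{O}_W(E)(V_0)$ has a simple pole along $D$, contradicting $(1)$. For $(2) \Rightarrow (1)$, let $E = \sum a_i E_i$ be $q$-exceptional effective; each $E_i$ is then $\tilde{F}$-exceptional by $(2)$. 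The inclusion $\mathcal{O}_Y \subset \tilde{F}_*\mathcal{O}_W(E)$ is automatic, and a section $g \in \tilde{F}_*\mathcal{O}_W(E)(V)$ is checked to be regular at every codimension-one point $\eta_D \in V$ because the strict transform $\tilde{D}$ satisfies $v_{\tilde{D}}(E) = 0$ (since $\tilde{D} \neq E_i$ as $\tilde{F}(E_i)$ has codimension $\geq 2$ while $\tilde{F}(\tilde{D}) = D$ does not); then $(g)_W + E \geq 0$ forces $v_{\eta_D}(g) = v_{\tilde{D}}(g) \geq 0$, and normality of $Y$ plus Hartogs promotes $g$ to $\mathcal{O}_Y(V)$.

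For $(2) \Rightarrow (3)$, set $V = Y \setminus \tilde{F}(\mathrm{Exc}(\tilde{F}))$, an open whose complement has codimension $\geq 2$. The restriction $\tilde{F}^{-1}(V) \to V$ is proper birational with zero-dimensional fibers (no exceptional divisor meets $\tilde{F}^{-1}(V)$), hence an isomorphism by Zariski's Main Theorem and normality of $V$; and by $(2)$ the open $\tilde{F}^{-1}(V)$ also misses $\mathrm{Exc}(q)$, so $q$ restricts to an isomorphism from $\tilde{F}^{-1}(V)$ onto an open $U \subset X$. Composing these two isomorphisms realizes $F|_U : U \xrightarrow{\sim} V$. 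For $(3) \Rightarrow (2)$, if a prime $q$-exceptional $E$ were not $\tilde{F}$-exceptional, then the center of $v_E$ on $Y$ would be the generic point $\eta_D$ of some prime divisor $D \subset Y$, and since $Y \setminus V$ has codimension $\geq 2$ we get $\eta_D \in V$. Transporting $\eta_D$ back through the isomorphism $(F|_U)^{-1}$ yields a codimension-one point of $U \subset X$; by uniqueness of centers of valuations on separated schemes, this point must be the center of $v_E$ on $X$, contradicting the codimension $\geq 2$ requirement for $q$-exceptionality.

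I expect the $(1) \Rightarrow (2)$ step to be the main obstacle: one must choose the neighborhood $V_0$ with care so that no $\tilde{F}$-exceptional divisor of $W$ lies over $V_0$, since otherwise the pullback of $\pi$ picks up additional exceptional components and $1/\pi$ fails to be a section of $\mathcal{O}_W(E)$ on $\tilde{F}^{-1}(V_0)$. The rest of the argument reduces to standard manipulations with divisorial valuations, Hartogs' theorem for normal varieties, and the purity of the exceptional locus for proper birational morphisms from smooth varieties to normal $\mathbb{Q}$-factorial ones.
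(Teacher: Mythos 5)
Your proposal is correct and follows essentially the same route as the paper's proof: the contrapositive for $(1)\Rightarrow(2)$ via the nontrivial image divisor $\tilde{F}(E)$ restricted to the complement of $\tilde{F}(\mathrm{Exc}(\tilde{F}))$, a direct check for $(2)\Rightarrow(1)$, and the identification of isomorphic open sets with small complement for $(2)\Leftrightarrow(3)$. The paper's version is much terser (especially for $(2)\Rightarrow(1)$ and $(2)\Leftrightarrow(3)$), and your valuation-theoretic and Zariski-Main-Theorem details correctly fill in what the paper leaves implicit.
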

\begin{proof} Let us show (1) implies (2) using proof by contrapositive. Let $W$ be a resolution in the sense of the previous definition. Let $E$ be a prime $q$-exceptional divisor and suppose it is not $\tilde{F}$-exceptional. Then $\tilde{F}(E)$ is a non-trivial divisor which satisfies $\tilde{F}_*(\mathcal{O}_W(E))\ne \mathcal{O}_Y.$ Indeed, let us write $N=Y\backslash \tilde{F}_{|E}(exc(\tilde{F}))$, and notice that $\mathrm{codim }(Y\backslash N)\ge 2$. Hence, $\tilde{F}(E)_{|N}$ is a prime Cartier divisor which is not trivial.
Therefore, $\tilde{F}_*(\mathcal{O}_W(E))_{|N}=\mathcal{O}_N(\tilde{F}(E))\ne \mathcal{O}_N$. It follows that $$\tilde{F}_*(\mathcal{O}_W(E))\neq\mathcal{O}_{Y},$$
which means that $F$ fails to be a rational contraction. Now the contrapositive yields the claim.

To see that (2) implies (1), observe that if $E$ is $\tilde{F}$-exceptional, then $\tilde{F}_*\mathcal{O}_W(E)=\mathcal{O}_Y$.

Observe that the item $(2)$ holds if and only if the item (3). Indeed, let $U\subset X$ be the open set over which $F$ is injective. Note the codimension of the complement of $F(U)$ is bigger or equal than $2$ if and only if any $q$-exceptional divisor is also $\tilde{F}$-exceptional. This completes the proof.
\end{proof}

\medskip
\begin{prop}\label{EFF}
Let $f:X\rightarrow Y$ be a birational morphism between smooth surfaces. If we denote by $F:X^{[n]}\dasharrow Y^{[n]}$ the induced map between Hilbert schemes, then $F$ is a rational contraction and $$\EFF\left(X^{[n]}\right)\vert_{V}=F^*\left(\EFF\left(Y^{[n]}\right)\right),$$
where $V$ stands for the image $V=F^*\left(N^1\left(Y^{[n]}\right)\right)$.
\end{prop}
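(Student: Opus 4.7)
The plan is to treat the two assertions separately. For the rational-contraction property, I would invoke condition (3) of Lemma \ref{CONT}. Since $f:X\to Y$ is a birational morphism of smooth surfaces, its exceptional locus $\mathrm{Exc}(f)$ consists of finitely many curves contracted to a finite set of points $\{p_1,\ldots,p_m\}\subset Y$, and $f$ restricts to an isomorphism $X\setminus \mathrm{Exc}(f)\xrightarrow{\sim} Y\setminus\{p_1,\ldots,p_m\}$. Let $V'\subset Y^{[n]}$ be the open locus of subschemes whose support is disjoint from $\{p_1,\ldots,p_m\}$, and let $U\subset X^{[n]}$ denote the open locus of subschemes whose support is disjoint from $\mathrm{Exc}(f)$. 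Then $F|_U:U\to V'$ is an isomorphism induced by $f$, and the complement of $V'$ consists of subschemes whose support meets one of the finitely many $p_i$, which is a codimension-two condition in the $2n$-dimensional Hilbert scheme $Y^{[n]}$. Lemma \ref{CONT}(3) then yields that $F$ is a rational contraction.

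For the cone equality, I would proceed via pullback and pushforward on a fixed resolution $q:W\to X^{[n]}$, $\tilde F:W\to Y^{[n]}$ of $F$. Since $F|_U$ is an isomorphism, $F$ is birational, and hence so is $\tilde F$. Define $F^*E:=q_*\tilde F^*E$ and $F_*D:=\tilde F_*q^*D$; both operators preserve effective divisor classes (pullback and pushforward by a morphism do, as does $q_*$ for a birational morphism), hence by continuity they preserve the pseudo-effective cone $\EFF$. The inclusion $F^*(\EFF(Y^{[n]}))\subseteq \EFF(X^{[n]})\cap V$ is immediate from these properties.

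For the reverse inclusion, the key identity is $F_*\circ F^*=\mathrm{id}_{N^1(Y^{[n]})}$. To prove it, one computes
\[
F_*F^*E=\tilde F_*\bigl(q^*q_*\tilde F^*E\bigr)=\tilde F_*\bigl(\tilde F^*E+Z\bigr),
\]
where $Z$ is the $q$-exceptional correction arising from the standard identity $q^*q_*D'=D'+(\text{$q$-exceptional})$ for a birational morphism $q$. The first summand equals $E$ since $\tilde F$ is birational (projection formula), while $\tilde F_*Z=0$ because, by Lemma \ref{CONT}(2), every $q$-exceptional prime divisor is $\tilde F$-exceptional. Consequently $F^*\circ F_*$ restricts to the identity on $V=F^*(N^1(Y^{[n]}))$, so given any $[D]\in \EFF(X^{[n]})\cap V$, the class $[E]:=F_*[D]\in \EFF(Y^{[n]})$ satisfies $F^*[E]=[D]$, completing the inclusion.

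The main technical subtlety I anticipate is carefully establishing the identity $q^*q_*\tilde F^*E=\tilde F^*E+Z$ with $Z$ supported on the $q$-exceptional locus, and then using Lemma \ref{CONT}(2) at exactly the right moment to conclude $\tilde F_*Z=0$; the rational-contraction hypothesis enters here in an essential way. Passage from effective to pseudo-effective classes, as well as the linearity of $F^*$ and $F_*$, is then formal.
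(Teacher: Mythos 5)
Your proposal is correct, but for the key inclusion $\EFF(X^{[n]})\cap V\subseteq F^*(\EFF(Y^{[n]}))$ it takes a genuinely different route from the paper. The paper argues by contradiction through the cone of curves: if $D\in N^1(Y^{[n]})$ is not pseudo-effective, then it pairs negatively with some \emph{moving} curve class $C$ (this invokes the nontrivial duality between the pseudo-effective cone and the cone of moving curves), and one lifts $C$ to a class $C_0$ on the resolution $W$ and applies the projection formula twice to contradict the pseudo-effectivity of $F^*(D)$. You instead work entirely on the divisor side: you define $F_*=\tilde F_*\circ q^*$, observe that it preserves effective (hence pseudo-effective) classes, and establish $F_*\circ F^*=\mathrm{id}$ by writing $q^*q_*\tilde F^*E=\tilde F^*E+Z$ with $Z$ supported on the $q$-exceptional locus and then killing $Z$ with Lemma \ref{CONT}(2). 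This avoids the moving-curve duality entirely and makes the role of the rational-contraction hypothesis explicit --- it is precisely what guarantees $\tilde F_*Z=0$ --- at the mild cost of justifying the standard fact that $D'-q^*q_*D'$ is $q$-exceptional for a birational morphism onto a $\mathbb{Q}$-factorial variety. For the rational-contraction claim the two arguments are essentially the same application of Lemma \ref{CONT}(3); you handle a general birational morphism directly via the codimension count on subschemes supported at the blown-down points, whereas the paper first reduces to a single blow-up.
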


\begin{proof} Since a birational morphism between smooth surfaces is a composition of blow downs, it suffices to analyze the case $X=\mathrm{Bl}_pY$, $i.e.$, the blowup of a point $p\in Y$.

Observe that $F$ is birational and that there are open sets $U\subset X^{[n]}$ and $T\subset Y^{[n]}$, such that $F:U\rightarrow T$ is an isomorphism with $\mathrm{codim }(Y^{[n]}\backslash T)\ge 2$.  Indeed, the open set $U$ is the complement of the divisor $E[n]$, which generically parametrizes sub schemes whose support intersects the exceptional divisor $E\subset X$. The open set $T$ is complement of the family of subschemes whose support contains the point $p\in Y$. It follows from Lemma \ref{CONT} that the map $F$ is a rational contraction.

Let us now address the claim about the effective cone. This is a claim about the equality of two sets, so we are going to argue by showing two inclusions.

For the first inclusion, let us start by considering an effective divisor $D \in \text{Eff}(Y^{[n]})$. 
Then, $F^*(D)$ is again an effective divisor whose class is in $F^*(N^1(Y^{[n]}))$.
Thus, 
\[ F^* \left( \text{Eff}\left(Y^{[n]}\right)\right) \subset \text{Eff}\left(X^{[n]}\right) \cap F^*\left(N^1\left(Y^{[n]}\right)\right). \] 
Taking closures, we have that
\[F^*\left(\EFF(Y^{[n]})\right) \subset \EFF\left(X^{[n]}\right) \cap F^*\left(N^1\left(Y^{[n]}\right)\right). \] 

For the opposite inclusion, let us consider $D' \in  \text{Eff}\left(X^{[n]}\right) \cap F^*\left(N^1\left(Y^{[n]}\right)\right)$, which means that $D'$ is an effective divisor and $D '= F^*(D)$, for a divisor class $D\in N^1(Y^{[n]})$.
Assume $D$ is not pseudo-effective. Then, there exists a moving curve class $C\in N_1(Y^{[n]})$, such that $C \cdot D <0$. Observe there exists a moving curve class $C' \in N_1( X^{[n]})$ defined as the class of the inverse image of a general element in $C$. Note, $F_*(C')=C$. We want to show that $C' \cdot D'<0$. To do this, let $W$ be a resolution of the rational map $F$ in the sense of Definition \ref{RES} ,
\[\begin{diagram}
&& W && \\
& \ldTo^{q} & & \rdTo^{\tilde{F}} &\\
X^{[n]} &&\rDashto^{F}&& Y^{[n]},
\end{diagram}\] 
and observe that $q^*(D')=\tilde{F}^*(D)$. Indeed, the maps $q$ and $\tilde{F}$ are birational morphisms, hence the induced pullbacks, $q^*$ and $\tilde{F}^*$, are injective. Furthermore, there is a curve class $C_0\in N_1(W)$ such that $q_*(C_0)=C'$ and $\tilde{F}_*(C_0)=C$. Observe that $\tilde{F}^*(D). C_0<0$ by the projection formula. The projection formula again now implies that
\begin{equation*}
    \begin{aligned}
    q_*(\tilde{F}^*(D).C_0)&=q_*(q^*(D').C_0)\\
    &=D'.C'<0,
\end{aligned}
\end{equation*}
which is a contradiction due to the fact that $D'$ is assumed pseudo-effective.
Therefore, the divisor class $D$ must be pseudo-effective and we get
\[\EFF\left(X^{[n]}\right) \cap F^*\left(N^1\left(Y^{[n]}\right)\right) \subset F^*\left(\EFF(Y^{[n]})\right),\] as desired. This completes the proof. 
\end{proof}

\medskip\noindent
We have compared in the previous proposition the effective cones $\EFF(Y^{[n]})$ and $\EFF(X^{[n]})$ via the rational contraction $F$. We now want to compare the stable base locus decomposition of both effective cones. This is the content of Theorem \ref{MTheo} and Corollary \ref{cor: SBLD V}. In order to prove these results, let us recall the notions involved.

\begin{defn}
Let $D$ be a divisor on a smooth projective variety $X$.
The \textit{base locus} of $D$, denoted $\mathbf{Bs}(D)$, is the intersection of all divisors $D'$ linearly equivalent to $D$, \[\mathbf{Bs}(D):= \bigcap_{D' \in \vert D \vert}D'.\] 
\end{defn}

\medskip\noindent
The base locus is a natural construction, but varies unexpectedly for divisors with similar equivalence classes. This situation leads to the following definition.
\begin{defn}
The \textit{stable base locus} of $D$, denoted $\mathbf{B}(D)$, is the intersection of the base locus of all positive multiples of $D$, \[\mathbf{B}(D):= \bigcap_{m>0}\mathbf{Bs}(mD).\]
\end{defn}

\medskip\noindent
Equivalently, this is the intersection of all divisors $D'$ linearly equivalent to some multiple of $D$, \[\mathbf{B}(D)= \bigcap_{D' \in \vert mD \vert,m>0}D'.\]

\medskip\noindent
On a Mori dream space $X$, there is a open set of the space $N^1(X)$ on which the stable base locus is well defined and locally constant \cite{ELMNP}. 
We call this the \textit{stable locus}. 
The complement of the stable locus are the \textit{walls} of the stable base locus decomposition (SBLD) and they are defined by linear equations. We will denote the collection of such walls by $\Delta_X$. Some of these properties hold in more generality which we will discuss two paragraphs below. The techniques we use give us control over linear walls, which leads us to make the following definition.



\begin{defn}\label{MORIsurface}
Let $X$ be a smooth surface with $h^1(\mathcal{O}_X)=0$. We call $X$ a \textit{Mori surface} 
if the Hilbert scheme $X^{[n]}$ has a linear stable base locus decomposition for all $n$. In other words, the walls of the SBLD, denoted $\Delta_X$, are defined by linear equations for all $n$. 
\end{defn}


\medskip\noindent
Consider a birational morphism between surfaces $f: X\rightarrow Y$, such that $X^{[n]}$ is a Mori dream space. We aim to show that the walls of the SBLD of $N^1(Y^{[n]})$ are defined by linear equations. Furthermore, they are determined by divisors on $X^{[n]}$ (Theorem \ref{MTheo}). For example, if $X$ is a Fano surface, then both $X^{[n]}$ and $Y^{[n]}$ are Mori dream spaces \cite{BC13}, and we may try to restrict the structure of the SBLD of $N^1(X^{[n]})$ to $N^1(Y^{[n]})$. However, when we attempt to do this we run into a problem: if $E$ is the exceptional divisor of $f$, the curve $C_E[n]$ defines a wall in $N^1\left( X^{[n]} \right)$ which completely contains $F^*\left(\text{Eff}(Y^{[n]})\right)$. Hence, the restriction of the stable base locus of $N^1(X^{[n]})$ gives no information about the SBLD of $N^1(Y^{[n]})$.

\medskip\noindent
In order to deal with this, we recall the definition of the augmented (restricted) stable base loci.
\begin{defn}\label{def:aug}
The \textit{augmented (resp., restricted) stable base locus} of $D$, denoted by $\mathbf{B}_+(D)$ (resp., $\mathbf{B}_-(D)$), is the stable base locus of $D-\epsilon A$ (resp, $D+ \epsilon A)$, that is
\[\mathbf{B}_+(D) := \mathbf{B}(D-\epsilon A) \quad\left(resp,\  \mathbf{B}_-(D) = \mathbf{B}(D+\epsilon A) \right),\]
for any ample divisor $A$ and $0<\epsilon<<1$.
These are independent of the choice of $A$.
\end{defn}

\medskip\noindent
The augmented and restricted stable base loci will allow us to detect the SBLD of $N^1(Y^{[n]})$ inside the wall induced by the curve $C_E[n]$ inside $N^1(X^{[n]})$.
Every numerical class has a well defined augmented and restricted base locus which are invariant under scaling. 
The stable locus is precisely where the augmented base locus is equal to the restricted base locus. Moreover, it was shown in \cite[Prop. 1.26]{ELMNP}, that the locally constant property discussed before Definition \ref{MORIsurface} is satisfied by divisor classes such that $B_+(D)=B_{-}(D)$.
We will primarily be interested in the augmented stable base locus, so we will state a decomposition with respect to that alone.

\medskip\noindent
\begin{defn}\label{Delta} Let $X$ be a smooth projective variety.
The \textit{augmented stable base locus decomposition, ASBLD} of $N^1(X)$, is the partition of $\overline{\mathrm{Eff}}(X)$ such that  $\mathbf{Bs}_+(D)$ is fixed for every class $D$ in a fixed element of the partition. A \textit{wall} of the ASBLD is the interior of the boundary of any element in the partition given by the ASBLD. 
\end{defn}

\medskip\noindent
We will use the notion of ASBLD in order to define the ASBLD of a subspace $V$ of the N\'eron-Severi space. This will allow us to compare the ASBLD similarly to Proposition \ref{EFF}, where we compared the effective cones. Observe that the chambers of stable base locus decomposition of $X$ are the interiors of the chambers in the ASBLD of $V$.
Similarly, the curves defining the walls of the SBLD fully determine the ASBLD and vice versa. 


\begin{defn}\label{Delta2} Let $X$ be a smooth projective variety and $V$ a subspace of $N^1(X)$.
The \textit{augmented stable base locus decomposition} of $V$, denoted $\Delta_X\vert_V$ is the restriction of the ASBLD of $X$ to $V \cap \overline{\mathrm{Eff}}(X)$.
A \textit{wall} of the ASBLD is again the interior of the boundary of any element in the partition given by the ASBLD.
\end{defn}

\medskip\noindent
This definition formalizes the restrictions to $V$ of the walls in the SBLD of $N^1(X)$ that do not contain $V$.
In a Mori dream space, each wall in the SBLD is defined by a curve class $C$ which is dual to every divisor along the wall.
The subvariety covered by the curves in the class $[C]$ is contained in the base locus on one side of the wall, but often not on the other side.
This is equivalent to having $\dim(V)-1$ linearly independent divisors on the wall whose augmented base locus agree and are not equal to their restricted base locus, which also all agree.

\medskip\noindent
In this paper, every element of the partition of the N\'eron-Severi space (resp., subspace $V$) has a (resp., relative) full dimensional interior and any pair of chambers whose closures intersect along a codimension 1 subspace are separated by a wall. Informally, if the decomposition is finite polyhedral, then one can apply the definition to each subspace containing a wall in order to further decompose such a wall.

\medskip\noindent
We want to apply these notions to the Hilbert scheme of points $Y^{[n]}$ as we vary the surface $Y$ within its birational class. Indeed, let $f:X\rightarrow Y$ denote the blowup of $Y$ at a general point, and $E$ the exceptional divisor. 
Let $V=F^*\left(N^1\left( Y^{[n]}\right)\right)$ be a subspace of $N^1\left(X^{[n]}\right)$.
Our goal for the rest of this section is to show the restriction of the ASBLD to $V$ is equal to the pull back of the SBLD.
We first show one direction of this statement. Let us denote by $E\subset X$ the exceptional divisor and the upper half space  which consists of divisor classes for which $E[n]$ has a positive coefficient by $\mathbb{H}_{E}\subset N^1(X^{[n]})$.

\begin{lem}\label{Prop10}
Any wall of the SBLD of $X^{[n]}$ in $\mathbb{H}_{E}$ induces a wall in the SBLD of $Y^{[n]}$.
Since the ample cone of $X^{[n]}$ lies in the other half space, this means every wall of the ASBLD of $V$ induces a wall of the SBLD of $Y^{[n]}$.
\end{lem}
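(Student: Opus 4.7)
The plan is to prove the statement by establishing, for any pseudoeffective class $E \in N^1(Y^{[n]})$, the identification
\[
\mathbf{B}_+^{X^{[n]}}\!\left(F^*E\right) \;=\; F^{-1}\!\left(\mathbf{B}^{Y^{[n]}}(E)\right) \;\cup\; E[n],
\]
up to loci of codimension $\geq 2$. This translates wall-crossings of the SBLD of $Y^{[n]}$ into wall-crossings of the ASBLD of $V$, and the latter are in turn controlled by walls of the SBLD of $X^{[n]}$ meeting $\mathbb{H}_E$; both sentences of the lemma then follow at once.

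I would establish the identification in two steps using a resolution of $F$ as in Proposition \ref{EFF},
\[
\tilde F\colon W \to Y^{[n]}, \qquad q\colon W \to X^{[n]}.
\]
First, the projection formula applied to $\tilde F$ and $q$, combined with $\tilde F_*\mathcal{O}_W = \mathcal{O}_{Y^{[n]}}$ (rational contraction) and $q_*\mathcal{O}_W = \mathcal{O}_{X^{[n]}}$, yields $H^0(X^{[n]}, m F^*E) \cong H^0(Y^{[n]}, mE)$ for every positive integer $m$. Second, since the ample cone of $X^{[n]}$ lies in the complement of $\mathbb{H}_E$, any perturbation $F^*E - \epsilon A$ appearing in the definition of $\mathbf{B}_+$ can be rewritten as $F^*E' + \delta E[n]$ for some $E' \in N^1(Y^{[n]})$ close to $E$ and some $\delta > 0$. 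The same projection formula together with the equality $\tilde F_*\mathcal{O}_W(kE) = \mathcal{O}_{Y^{[n]}}$ (the defining property of the exceptional divisor of the rational contraction) identifies sections of $m(F^*E' + \delta E[n])$ with sections of $mE'$, and these sections automatically vanish on $E[n]$; descending to base loci gives the displayed identity.

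With the identification in hand, the first sentence is immediate: a wall of the SBLD of $X^{[n]}$ meeting $\mathbb{H}_E$ is precisely a locus across which $\mathbf{B}(F^*E' + \delta E[n])$ jumps, and by the identity this is exactly a jump in $F^{-1}(\mathbf{B}(E'))$, i.e., a wall of the SBLD of $Y^{[n]}$ dual to the pushforward of the defining curve class. The second sentence then follows formally: the perturbation defining $\mathbf{B}_+$ of a class in $V$ pushes it into $\mathbb{H}_E$, so walls of the ASBLD of $V$ are exactly the traces on $V$ of walls of the SBLD of $X^{[n]}$ meeting $\mathbb{H}_E$, and the first sentence applies. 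The main obstacle is the section identification for the perturbed class $F^*E' + \delta E[n]$: it requires $\tilde F_*\mathcal{O}_W(kE) = \mathcal{O}_{Y^{[n]}}$ for every relevant exceptional component, plus some care with the rationality of $\delta$, which is handled by passing to sufficiently divisible multiples.
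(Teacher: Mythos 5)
Your argument is essentially correct, but it takes a genuinely different route from the one in the paper. The paper works dually, through curve classes: it first observes that $E[n]\cdot C_E<0$ forces $E[n]$ into the base locus of every class in $\mathbb{H}_E$, so that chambers there can only differ along $U^{[n]}$; it then takes the curve $C'$ defining a wall upstairs, pushes it forward to $C=F_*(C')$ using the projection formula over $U^{[n]}$, and finally certifies that $C$ cuts out a genuine wall downstairs by exhibiting $\rho(Y)$ linearly independent divisors dual to $C$ together with approximating sequences whose augmented base loci jump. You instead work directly with linear series: the rational-contraction identification $H^0(X^{[n]},mF^*E')\cong H^0(Y^{[n]},mE')$ plus the observation that every class in $\mathbb{H}_E$ is $F^*E'+\delta E[n]$ with $\delta>0$ gives the explicit formula $\mathbf{B}(F^*E'+\delta E[n])=E[n]\cup\overline{F^{-1}(\mathbf{B}(E')\cap T)}$, independent of $\delta$, from which both sentences of the lemma are read off. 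Two remarks. First, a small imprecision: the right-hand side of your displayed identity should be the stable base locus of the \emph{perturbed} class $E-\epsilon F_*A$ (i.e.\ essentially $\mathbf{B}_+(E)$), not $\mathbf{B}(E)$ itself; this does not affect the detection of walls, since a jump of $\mathbf{B}(E-\epsilon A_Y)$ for $E$ arbitrarily close to a hyperplane still forces that hyperplane to be a wall of the SBLD of $Y^{[n]}$. Second, your ``up to codimension $\geq 2$'' hedge is only needed on the $Y^{[n]}$ side (the locus of schemes through the blown-down point), and that loss only obstructs the converse implication, which is Theorem \ref{MTheo}, not this lemma; so your version in fact does not need the genericity of the blown-up point, which the paper's proof invokes. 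What your approach does not produce, and the paper's does, is the explicit curve class $C=F_*(C')$ defining the induced wall; those curve classes are reused in the proof of Theorem \ref{MTheo} and in Proposition \ref{BLDHirzebruch}, so if you adopt your argument you would need to supply them separately there.
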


\begin{proof} 
Let $D$ be a divisor on $Y^{[n]}$, $F^*(D) = D'$, and $U$ denote the open set where the map $f:X\rightarrow Y$ is an isomorphism. Note in case we have a curve class $C\in N_1(X)$, then we may consider the hyperplane in $N^1(X)$ defined by it: the divisor classes which pair zero with $C$.

The proof proceeds in three steps.
First, we show any difference in base locus between chambers in $\mathbb{H}_{E}$ differ by a locus in $U^{[n]}$.
Second, we show that the curve defining the hyperplane in $N^1(X^{[n]})$ (which restricts to an augmented wall in $V$), defines the same hyperplane in $N^1(Y^{[n]})$ (which gives rise to a wall in the SBLD).
Finally, we show the latter hyperplanes define walls by exhibiting enough divisors whose restricted and augmented base loci are different.

For the first step, let $C_E$ be the curve defined as those schemes containing $n-1$ general fixed points and whose $n$-th point is on $E$.
Observe that $E[n] C_{E} <0$, which implies that all representatives of $[C_E]$ are contained in $E[n]$. If $D'\cdot C_E =0$, then it follows that $E[n]$ is in the (augmented) base locus for every divisor of the form $D'+aE[n]$, where $a\in \mathbb{Q}$ is positive.
Thus, any wall of the SBLD in $\mathbb{H}_{E}$ separates two cones of divisors whose base loci differs only by schemes lying in $U^{[n]}$ (the complement of $E[n]$). Consequently, such base loci can be considered as points in either $X^{[n]}$ or $Y^{[n]}$.

Let $C'$ be a curve defining one such wall in $X^{[n]}$.
Since representatives of the class $[C']$ cover some portion of the difference of the base loci ($i.e.$, points in $U^{[n]}$), we can assume this curve $C'$ has an open subset contained in $U^{[n]}$. Observe we may take the closure of such an open set either in $X^{[n]}$ or $Y^{[n]}$ (as $U^{[n]}$ is an open set in both spaces).
We will denote by $C'$ the closure in $X^{[n]}$ and by $C$ the closure in $Y^{[n]}$.
As the blowup point $X=Bl_pY$ was general, then it is not in the support of any subscheme parameterized by some curve equivalent to $C$. This means that we can assume without loss of generality that the intersection of $C$ with a divisor $D$ occurs in $U^{[n]}$. Likewise the intersection $C'. D'$ occurs in $U^{[n]}$. 
Then by the push-pull formula applied over $U^{[n]}$, we have that
\begin{equation*}\label{projection}
\begin{aligned}
C \cdot D = F_*(C').F_*(D')&=F_*(C'.F^*F_*(D'))\\
&=F_*(C'.D').
\end{aligned}
\end{equation*}

In particular, if $C$ defines a hyperplane in $N^1(Y^{[n]})$, then this hyperplane is the same as the restriction to $V$ of the hyperplane defined by $C'$ on $N^1(X^{[n]})$, and vice versa.

Finally for the last step, it suffices to show that the curve $C'$ defining an wall in $\mathbb{H}_{E}$ does in fact define a wall in $Y^{[n]}$ via $C$. Recall $C = F_*(C')$, which means we map the open set which is isomorphic and take the closure.
We know that $C'$ defines a wall of the ASBLD of $F^*(N^1(Y^{[n]}))$.
In particular, there are $\rho(Y)$ linearly independent divisors $F^*(D)$ such that $C' \cdot F^*(D) =0$ and each of those divisors has a sequence $\{ F^*(D_i)\}$ of divisors such that augmented base locus of $F^*(D_i)$ is different from that of $F^*(D)$ and are all equal.
By the reasoning above the difference between their augmented base loci and that of $F^*(D)$ must consist entirely of points in $U^{[n]}$.
Then the augmented base locus of a divisor on $Y^{[n]}$ and the augmented base locus of its pullback can only differ by points outside of $U^{[n]}$ so the augmented base locus of each $D_i$ is different from that of $D$ in the same way.
Thus, $C$ defines a wall as desired.
\end{proof}

\medskip\noindent
The converse of the previous lemma is the main result of this section and it is what we show next.

\begin{thm}
\label{MTheo} Let $f:X\rightarrow Y$ a birational morphism between Mori surfaces which is a sequence of blow ups of general points. Then, the restriction of the ASBLD to $V$ is equal to the pull back of the SBLD via the rational map $F:X^{[n]}\dasharrow Y^{[n]}$. That is to say
$$\Delta_{X^{[n]}}\vert_V = F^*(\Delta_{Y^{[n]}}),$$
where $V=F^*N^1(Y^{[n]})$.
\end{thm}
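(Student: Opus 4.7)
The plan is to establish the inclusion $F^*(\Delta_{Y^{[n]}}) \subseteq \Delta_{X^{[n]}}\vert_V$, since the reverse inclusion is precisely the content of Lemma \ref{Prop10}. Factoring $f$ into a sequence of blow-ups at general points and inducting on the number of blow-ups reduces this to the case $X = \mathrm{Bl}_p Y$ with $p \in Y$ a general point. In that case $V$ has codimension one inside $N^1(X^{[n]})$ (it is cut out by the single condition of pairing trivially with $C_E[n]$), so there is a direct sum decomposition $N^1(X^{[n]}) = V \oplus \mathbb{Q}\cdot E[n]$, which will be used throughout.

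Given a wall $\mathcal{W} \subset N^1(Y^{[n]})$ of the SBLD of $Y^{[n]}$, dual to some curve class $[C]$, I would use the generality of $p$ to pick a representative of $[C]$ whose parametrized subschemes are all supported in $Y \setminus \{p\}$. Under the canonical isomorphism $(Y \setminus \{p\})^{[n]} \simeq (X \setminus E)^{[n]}$, this lifts to a curve $C' \subset X^{[n]}$ with $F_*(C') = C$. Applying the projection formula through a common resolution of $F$ in the manner of Proposition \ref{EFF}, one obtains
\[ F^*(D) \cdot C' \;=\; D \cdot C \quad \text{for every } D \in N^1(Y^{[n]}), \]
since all intersections may be computed on the isomorphism locus $U^{[n]}$. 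Hence the hyperplane $F^*(\mathcal{W}) \subset V$ coincides with the restriction to $V$ of the hyperplane in $N^1(X^{[n]})$ dual to $[C']$.

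It then remains to verify that $F^*(\mathcal{W})$ actually separates two distinct chambers of the ASBLD of $V$. Taking classes $D_-, D_+ \in N^1(Y^{[n]})$ on opposite sides of $\mathcal{W}$ with $\mathbf{B}(D_-) \neq \mathbf{B}(D_+)$, and any ample divisor $A$ on $X^{[n]}$, I would decompose $A = A_V + c\,E[n]$ according to $N^1(X^{[n]}) = V \oplus \mathbb{Q}\cdot E[n]$, and write $A_V = F^*(A_Y)$ for the unique $A_Y \in N^1(Y^{[n]})$. Since $V$ lies on the $C_E[n]$-wall, the class $E[n]$ is forced into $\mathbf{B}_+(F^*(D_\pm))$, while on $X^{[n]} \setminus E[n]$ the perturbation $F^*(D_\pm) - \epsilon A$ agrees, under the identification with $Y^{[n]} \setminus \{\Gamma : p \in \mathrm{supp}(\Gamma)\}$, with $D_\pm - \epsilon A_Y$. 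Consequently the augmented base locus of $F^*(D_\pm)$ restricted to $X^{[n]} \setminus E[n]$ coincides with the stable base locus of $D_\pm$ on the corresponding open subset of $Y^{[n]}$, and these differ by hypothesis. Hence $F^*(\mathcal{W})$ is indeed a wall of $\Delta_{X^{[n]}}\vert_V$.

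The main obstacle is the section-descent argument in the previous paragraph: one must make precise how a small ample perturbation on $X^{[n]}$ leaving the subspace $V$ still detects only information intrinsic to $Y^{[n]}$ away from the exceptional locus $E[n]$. The decomposition $N^1(X^{[n]}) = V \oplus \mathbb{Q}\cdot E[n]$ splits this into two independent pieces: the $V$-component of $A$ descends via $F^*$ to an honest ample-type perturbation on $Y^{[n]}$ by Proposition \ref{EFF}, while the $E[n]$-component only alters the base locus along $E[n]$ itself. Combined with the generality of $p$ (which prevents the representatives of the wall-defining curves from meeting the exceptional divisor), these observations pin down the augmented base loci of $V$-classes exactly where they are needed to detect the walls of $\Delta_{Y^{[n]}}$, completing the argument.
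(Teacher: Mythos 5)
Your proposal is correct and follows essentially the same route as the paper's proof: reduce to a single blow-up at a general point, use Lemma \ref{Prop10} for one inclusion, lift the wall-defining curve class of $Y^{[n]}$ to $X^{[n]}$ using the generality of $p$, identify the dual hyperplanes via the projection formula on the common open locus $U^{[n]}$, and detect the wall by comparing (augmented) base loci, which can only differ by schemes supported away from $U^{[n]}$. Your explicit splitting $N^1(X^{[n]})=V\oplus\mathbb{Q}\cdot E[n]$ and the decomposition of the ample perturbation are a slightly more explicit bookkeeping of the same mechanism the paper uses when it compares the augmented base loci of $F^*(D_{i,j})$ and $F^*(D_i)$.
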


\begin{proof}
We may assume that $X$ is a blowup $X=Bl_yY$, and that $f:X\rightarrow Y$ is the blowup map. It follows from Lemma \ref{Prop10} that a wall of the ASBLD of $V$ induces a wall of the SBLD of $Y^{[n]}$. In other words, $ F^*(\Delta_{Y^{[n]}})\supset\Delta_{X^{[n]}}\vert_V $. Then, it suffices to show that given a wall of $Y^{[n]}$ ($i.e.$, a wall in $F^*(\Delta_{Y^{[n]}})$) induces a wall of $V$.

Let $C$ be a curve class defining a wall of the SBLD of $Y^{[n]}$ and $D_i$ the $\rho(Y)-1$ linearly independent divisor class on that wall ($i.e.$, $C\cdot D_i =0$ in $Y^{[n]}$).
Since the point $y\in Y$ is a generic point and the support of the elements of $C$ sweeps out a curve, then $y$ is not part of the support of any subscheme contained in the general element of $C$.
Therefore, we can consider $C$ as a curve class $C'$ on $X^{[n]}$. As the $F^*(D_i)$ are still linearly independent, it suffices to show each $F^*(D_i)$ is dual to $C'$ and the augmented stable base locus of $F^*(D_{i,j})$ differs from that of $F^*(D_i)$.
Since $C' \cdot F^*(D_i)\vert_E = C \cdot D_i\vert_y=0$ as $y$ is general, we can apply the projection formula to each curve's open set in $U^{[n]}$. 
Thus, by the projection formula, \[C' \cdot F^*(D_i) = C \cdot D = 0,\] so $F^*(D_i)$ is dual to $C'$.

Since the augmented stable base locus of $F^*(D_i)$ and $D_i$ differ only by points not in $U^{[n]}$ for all divisors $D_i$ and the augmented stable base locus of $D_{i,j}$ differs from that of $D_i$ by points in $U^{[n]}$, the augmented stable base locus of $F^*(D_{i,j})$ differs from that of $F^*(D_i)$ as desired.
\end{proof}

\medskip\noindent
Relaxing the assumption that $y$ was a general point, a slight modification of the same argument gives the following corollary.

\begin{cor}
\label{cor: SBLD V}
Let $X = \text{Bl}_p(Y)$ where $p$ may no longer be general, and $X$ and $Y$ are Mori surfaces. Then the walls of $\Delta_{X^{[n]}}\vert_V$ differ from those of $F^*(\Delta_{Y^{[n]}})$ only by walls where the base locus on either side differs only by schemes whose support intersects the exceptional divisor or the blown down point.

\end{cor}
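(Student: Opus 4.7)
The plan is to revisit each step in the proofs of Lemma \ref{Prop10} and Theorem \ref{MTheo}, locate the exact places where the generality of $p$ was invoked, and verify that relaxing this hypothesis introduces discrepancies only of the stated form. Recall that $F: X^{[n]} \dashrightarrow Y^{[n]}$ restricts to an isomorphism between $U^{[n]} := X^{[n]} \setminus E[n]$ and the complement $T \subset Y^{[n]}$ of the family of subschemes containing $p$, and $\operatorname{codim}(Y^{[n]} \setminus T) \ge 2$. The guiding principle is that any phenomenon occurring in the common open set $U^{[n]} \cong T$ transfers cleanly between $X^{[n]}$ and $Y^{[n]}$, so the only obstruction to the equality $\Delta_{X^{[n]}}\vert_V = F^*(\Delta_{Y^{[n]}})$ must be supported in the complement, which on $X^{[n]}$ consists of schemes meeting $E$ and on $Y^{[n]}$ of schemes containing $p$.

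First I would re-examine Lemma \ref{Prop10}, which gave the inclusion $\Delta_{X^{[n]}}\vert_V \subseteq F^*(\Delta_{Y^{[n]}})$. Its proof only required that a wall-defining curve $C'$ in $\mathbb{H}_E$ admit a general representative whose support lies in $U^{[n]}$, so that $F_*C'$ is a well-defined curve class on $Y^{[n]}$; the genericity of $p$ was never invoked here, and the conclusion survives verbatim whenever the wall in $X^{[n]}$ is cut out by a curve whose generic element avoids $E$. If, on the other hand, $p$ is special and some wall of $\Delta_{X^{[n]}}\vert_V$ is defined by a curve class all of whose representatives meet $E[n]$, then the base-locus jump across that wall is itself contained in the union of $E[n]$ (on $X^{[n]}$) and the locus of schemes containing $p$ (on $Y^{[n]}$), which is exactly the exceptional contribution allowed by the statement.

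For the opposite inclusion, I would rerun the argument of Theorem \ref{MTheo}. Given a wall of $Y^{[n]}$ defined by a curve class $C$ and divisor classes $D_i$ with $C \cdot D_i = 0$, the original argument lifted $C$ to a class $C'$ on $X^{[n]}$ by observing that a general element of $C$ had support missing $p$. When $p$ is not general this may fail, but one can still take $C'$ to be the closure in $X^{[n]}$ of the preimage of $C \cap T$; the projection formula applied on $U^{[n]}$ then yields $C' \cdot F^*(D_i) = C \cdot D_i = 0$ away from the exceptional locus, so $F^*(D_i)$ remains dual to $C'$ modulo contributions supported on $E[n]$. Likewise, the augmented base loci of $F^*(D_i)$ and nearby $F^*(D_{i,j})$ agree with those of $D_i$ and $D_{i,j}$ up to schemes whose support meets $E$ or $p$. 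The main obstacle will be precisely this accounting: one must verify that no genuinely new wall of $\Delta_{X^{[n]}}\vert_V$ can appear other than through curves forced to meet $E$, and that the failure of strict duality across $E[n]$ never creates a spurious wall whose base-locus change is already fully detected in $U^{[n]}$. Because $F$ is an isomorphism over $U^{[n]}$, any stable base locus decomposition is faithfully transferred outside the exceptional part, and all residual discrepancies are localized exactly as the corollary states.
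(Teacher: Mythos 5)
Your proposal is correct and follows essentially the same route as the paper, which offers no separate proof of this corollary but explicitly describes it as ``a slight modification of the same argument'' used for Lemma \ref{Prop10} and Theorem \ref{MTheo}; your case analysis of where the genericity of $p$ enters and how its failure localizes all discrepancies to $E[n]$ and the schemes through $p$ is exactly the intended modification. One small slip: genericity of $p$ \emph{is} invoked in the proof of Lemma \ref{Prop10} (to ensure the wall-defining curve avoids the blown-up point), but since you separately handle the case where every representative meets $E[n]$, your argument still covers it.
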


\medskip\noindent
As mentioned at the beginning of the section, we also get the previous result for any birational morphism of surfaces.
\begin{cor}
\label{cor: birational SBLD V}
Let $f: X \to Y$ be a birational morphism between Mori surfaces. Then the walls of the ASBLD of $V$ differs from those of $Y^{[n]}$ only by walls where the base locus on either side differs only by schemes whose support intersects the exceptional divisors or the blown down points.
\end{cor}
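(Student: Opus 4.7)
The plan is to prove this corollary by induction on the length of a factorization of $f$ into point blow-ups. By the classical factorization theorem for birational morphisms of smooth projective surfaces, $f : X \to Y$ factors as a composition $X = X_k \to X_{k-1} \to \cdots \to X_0 = Y$, where each $h_i : X_i \to X_{i-1}$ is the blow-up of a (not necessarily general) point. The base case $k=1$ is precisely Corollary \ref{cor: SBLD V}, so only the inductive step needs attention.

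For the inductive step, I would split $f = g \circ h$ with $h : X \to Z$ a single blow-up and $g : Z \to Y$ a composition of $k-1$ blow-ups. Denote the induced rational contractions on Hilbert schemes by $H : X^{[n]} \dasharrow Z^{[n]}$, $G : Z^{[n]} \dasharrow Y^{[n]}$, and $F = G \circ H$, and set $V' = G^*(N^1(Y^{[n]})) \subset N^1(Z^{[n]})$ together with $V = F^*(N^1(Y^{[n]})) \subset W = H^*(N^1(Z^{[n]})) \subset N^1(X^{[n]})$. Corollary \ref{cor: SBLD V} applied to $h$ compares $\Delta_{X^{[n]}}|_W$ with $H^*(\Delta_{Z^{[n]}})$ up to walls whose base loci differ only along schemes meeting the exceptional divisor of $h$ or its center in $Z$. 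The inductive hypothesis applied to $g$ compares $\Delta_{Z^{[n]}}|_{V'}$ with $G^*(\Delta_{Y^{[n]}})$ up to walls whose discrepancies are supported on the exceptional divisors of $g$ or on the contracted points in $Y$. I would then restrict the first comparison from $W$ down to $V = H^*(V')$ and pull the second back through $H^*$; the resulting discrepancy schemes combine, using that the proper transforms under $h$ of the exceptional components of $g$ together with the exceptional divisor $E_h$ exhaust the exceptional locus of $f$ in $X$, and that the contracted points in $Y$ for $g$ coincide with those for $f$.

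The main technical obstacle I anticipate is the Mori-surface hypothesis. Corollary \ref{cor: SBLD V} assumes both source and target of a single blow-up are Mori surfaces, but the intermediate $X_i$ in the factorization need not satisfy this a priori. The cleanest remedy is to observe that the substance of the corollary is a set-theoretic comparison of augmented base loci, which is well defined without any linearity assumption on the decomposition, and to rerun the comparison inductively from Lemma \ref{Prop10} and Theorem \ref{MTheo} step by step, invoking the Mori hypothesis only at the endpoints where the conclusion is phrased in terms of linear walls. A secondary point to verify is that subschemes on $Z$ whose support contains the center of $h$ lift to a positive-dimensional family over $E_h \subset X$, and that every member of such a family is subsumed in the exceptional locus one is permitted to discard in the final statement; handling this bookkeeping uniformly across all $k$ blow-ups is the main place where care is required.
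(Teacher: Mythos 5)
Your proof follows essentially the same route the paper intends: the paper states this corollary without proof, simply asserting that it follows from Corollary \ref{cor: SBLD V} by factoring the birational morphism $f$ into a sequence of point blow-ups and composing the single-blow-up comparisons, which is exactly your induction. Your observation that the intermediate surfaces $X_i$ need not a priori satisfy the Mori-surface hypothesis is a genuine subtlety that the paper also leaves unaddressed, and your proposed remedy (treating the comparison of augmented base loci set-theoretically and invoking linearity only at the endpoints) is a reasonable way to patch it.
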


\medskip\noindent
We now have an immediate corollary that concerns the image of Hilbert schemes of Mori surfaces under rational contractions.

\begin{cor}\label{imageMDS}
Let $f:X\rightarrow Y$ be a birational morphism between surfaces. If $X^{[n]}$ has a finite polyhedral stable base locus decomposition, then so does $Y^{[n]}$.
\end{cor}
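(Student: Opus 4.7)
The plan is to reduce to the case of a single blow up and then combine Proposition~\ref{EFF} with Corollary~\ref{cor: birational SBLD V}. Since any birational morphism of smooth surfaces factors as a finite composition of blow ups of points, it suffices to treat $f\colon X = \mathrm{Bl}_p(Y) \to Y$ and iterate the argument along the factorization.

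First I would handle the effective cone. By Proposition~\ref{EFF} we have the identification $F^*(\EFF(Y^{[n]})) = \EFF(X^{[n]}) \cap V$, where $V = F^*(N^1(Y^{[n]}))$. Intersecting the finite rational polyhedral cone $\EFF(X^{[n]})$ with the linear subspace $V$ yields a finite rational polyhedral cone. Applying the linear isomorphism $(F^*)^{-1}\colon V \to N^1(Y^{[n]})$ then shows that $\EFF(Y^{[n]})$ is itself finite polyhedral.

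Next I would transfer the wall structure. The SBLD of $X^{[n]}$ being finite polyhedral means its walls are carved out by finitely many rational hyperplanes. The ASBLD refines the SBLD only inside existing walls, and since there are finitely many such walls and each splits into finitely many pieces by further linear conditions, the ASBLD of $X^{[n]}$ is again finite polyhedral. Restricting this finite collection of hyperplanes to $V$ produces a finite collection of linear walls, so $\Delta_{X^{[n]}}\vert_V$ is finite polyhedral. By Corollary~\ref{cor: birational SBLD V}, $F^*(\Delta_{Y^{[n]}})$ can differ from $\Delta_{X^{[n]}}\vert_V$ only along walls whose base loci differ by schemes whose support meets the finite set of blown down points. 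Each such point contributes only finitely many additional linear walls, since the corresponding wall-defining curve classes are dual to divisors distinguished by incidence with that finite set of points. Pulling back through $(F^*)^{-1}$ then gives a finite polyhedral SBLD on $Y^{[n]}$.

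The main obstacle is precisely this last point: controlling the extra walls allowed by Corollary~\ref{cor: birational SBLD V}. In the generic-blow-up situation of Theorem~\ref{MTheo}, there are no such extra walls and the argument is immediate. For non-general or iteratively blown-down points, one needs to argue that the schemes whose support meets a fixed finite subset of $Y$ contribute only finitely many new rational hyperplanes in $N^1(Y^{[n]})$. This follows from the fact that the locus of length-$n$ subschemes meeting a fixed finite set is a finite union of closed subvarieties of $Y^{[n]}$, each of which produces at most finitely many dual hyperplanes, keeping the overall decomposition finite polyhedral.
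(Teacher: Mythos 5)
Your proposal follows the same route the paper has in mind: the paper states this corollary with no written proof, presenting it as immediate from Proposition \ref{EFF} (which realizes $\EFF(Y^{[n]})$ as the slice of the finite polyhedral cone $\EFF(X^{[n]})$ by $V$) together with the wall correspondence of Theorem \ref{MTheo} and Corollaries \ref{cor: SBLD V} and \ref{cor: birational SBLD V}. Your reduction to a single blow up, your treatment of the effective cone, and your restriction of the finitely many hyperplanes of $\Delta_{X^{[n]}}$ to $V$ all match that intended argument.

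The one place you go beyond the paper is also the one place your argument does not hold together: the finiteness of the ``extra'' walls permitted by Corollary \ref{cor: birational SBLD V} when the blown-down points are not general. You assert that the locus of length-$n$ schemes meeting a fixed finite set, being a finite union of closed subvarieties, ``produces at most finitely many dual hyperplanes.'' There is no such principle: a fixed closed subvariety $Z\subset Y^{[n]}$ can a priori enter and leave stable base loci across infinitely many distinct hyperplanes in $N^1(Y^{[n]})$ --- accumulating walls whose base-locus jumps are supported on a fixed locus is exactly how Mori-dream-space behavior fails in other settings --- so containment of the jump in a fixed codimension-two locus does not by itself bound the number of walls. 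To be fair, the paper is equally silent here: its wall correspondence is only proved to be exact for blow ups at general points, whereas an arbitrary birational morphism factors through non-general centers, so this is a thin spot in the source as much as in your write-up. But as written, your justification of that step is an unsupported assertion rather than a proof, and it is precisely the step a careful argument would need to supply.
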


\medskip\noindent
Observe that in case $Y^{[n]}$ has infinitely many chambers in its SBLD, then $X^{[n]}$ will as well. In particular, if $Y^{[n]}$ fails to be a Mori dream space due to the presence of infinitely chambers in the SBLD, then $X^{[n]}$ will fail to be a MDS for the same reason.


\begin{ex}\label{DELPEZZO}(SBLD correspondence for Del Pezzo surfaces.)
Let $S_d$ be the degree $9-d$ Del Pezzo surface for $8\geq d \geq  1$. 
Recall that $S_d$ is isomorphic to the blowup of $\mathbb{P}^2$ at $d$ general points. Hence, the Picard group $\Pic(S_d)$ is generated by $H$ and $E_1,\ldots,E_d$, where $H$ is the pull back from $\PP^2$ of a general line and the $E_i$ each denote an exceptional divisor.
Denote the blow up map by $f_d:S_d \to \mathbb{P}^2$ with corresponding birational maps $F_d: S_d^{[n]} \dashrightarrow \mathbb{P}^{2[n]}$. It follows from Theorem \ref{MTheo}, applied to each successive blow up $S_d \to S_{d-1}$, that

$$\Delta_{S_d^{[n]}}\vert_V = F_d^*(\Delta_{\mathbb{P}^{2[n]}}),$$ where $V = F_d^*\left( N^1\left( \mathbb{P}^{2[n]}\right)\right) = \left<H[n],B[n]\right>$.
\end{ex}

\medskip\noindent
\textbf{Example:} 
The following picture exemplifies how the SBLD of $\PP^{2[3]}$ can be interpreted inside the SBLD of $\FF^{[3]}_1 = S_1^{[3]}$ in the subspace generated by $\langle H, B\rangle$. In the case of $\FF^{[3]}_1$, we are drawing a cross-section of the cones. Also, we have shaded the moving cones to draw attention to how they correspond. Note $X_{i,0} = iH-\frac{1}{2}B$ on $\mathbb{F}_1^{[3]}$ and $X_{i} = iH-\frac{1}{2}B$ on $\mathbb{P}^{2[3]}$. 
\begin{center}
\begin{tikzpicture}[scale=.5]
\draw[fill=lightgray] (0,5/3)--(5/2,5/2)--(5/3,0)--(0,0)--(0,5/3);
\draw (-5,0)--(0,5)--(5,0)--(0,-5)--(-5,0);
\draw[thick] (0,5)--(0,-5);
\draw (5,0)--(-5,0);
\draw (2.5,2.5)--(-5,0);
\draw (2.5,2.5)--(-2.5,-2.5);
\draw (2.5,2.5)--(0,-5);
\draw (0,5/3)--(5,0);
\draw (0,5/3)--(10/3,-5/3);
\draw[above left] (0,5) node {$B$};
\draw[above left] (0,5/3) node {$H$};
\draw[above right] (2.5,2.5) node {$F$};
\draw[above left] (0,0) node {$X_{2,0}$};
\draw[below left] (0,-5) node {$X_{1,0}$};

\draw[fill=lightgray] (15,-4)--(12.75,3.5)--(15,4)--(15,-4);
\draw[->] (15,-4)--(20,-4);
\draw[->] (15,-4)--(15,4);
\draw[->] (15,-4)--(12.75,3.5);
\draw[->] (15,-4)--(11.5,2.5);
\draw[above] (20,-3.9) node {$B$};
\draw[above] (15,4.1) node {$H$};
\draw[above left] (12.75,3.5) node {$X_{2}$};
\draw[above left] (11.5,2.5) node {$X_{1}$};

\draw (0,-8) node {The SBLD of $\FF_1^{[3]}$};
\draw (15,-8) node {The SBLD of $\PP^{2[3]}$};
\end{tikzpicture}
\end{center}

\medskip
\subsection{Chambers of $\mathbb{F}_{r+1}^{[n]}$ induce chambers in $\mathbb{F}_r^{[n]}$}
\label{sec: SBLD correspondence for Hir}

\medskip\noindent
Let $\FF_r$ denote the Hirzebruch surface defined as $\FF_r=\PP(\mathcal{O}_{\PP^1}\oplus \mathcal{O}_{\PP^1}(r))$, with $r>0$.
Denote by $E$ the class in $\Pic(\FF_r)$ of the unique curve of self-intersection $-r$, denote by $F$ the class of a fiber of the ruling of $\FF_r$, and denote by $H$ the class of the strict transform of a general line under the birational map $\FF_r \dashrightarrow \PP^2$.
Note that $H = E +rF$. 

\medskip\noindent
We aim to compare the SBLD of the effective cone of  $\mathbb{F}_{r+1}^{[n]}$ with the SBLD of $\FF_r^{[n]}$.
This result does not follow immediately from Theorem \ref{MTheo} as the birational maps involved are now compositions of a blow up and a blow down.


\medskip\noindent
In order to show this result, we apply Corollary \ref{cor: SBLD V} to the morphisms $p_1$ and $p_2$ separately in the following diagram below and analyze how they interact in the effective cone of $\FF_{r,r+1}^{[n]}$. Set,
\[\begin{diagram}
&& \FF_{r,r+1} && \\
& \ldTo^{p_1} & & \rdTo^{p_{2}} &\\
\FF_{r} &&&& \FF_{r+1},
\end{diagram}\]
where $\FF_{r,r+1}$ is the blow up of $\FF_r$ at a point of $E$ (which is isomorphic to the blow up of $\FF_{r+1}$ at a  point not on $E$).

\begin{prop}\label{BLDHirzebruch}
Every linear wall of $\Delta_{\FF_{r+1}^{[n]}}$ either induces a linear wall of $\Delta_{\FF_r^{[n]}}$ for $r>0$ or its induced hyperplane intersects an intersection of two walls.
\end{prop}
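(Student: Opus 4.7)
The plan is to propagate a wall of $\Delta_{\FF_{r+1}^{[n]}}$ through the common resolution $\FF_{r,r+1}^{[n]}$ down to $\FF_r^{[n]}$, combining Theorem \ref{MTheo} on the $p_2$-side with Corollary \ref{cor: SBLD V} on the $p_1$-side. The asymmetry between the two sides of the diagram is crucial: the blowup center of $p_2$ can be taken off the negative section $E_{r+1}$ (hence ``general''), while the blowup center of $p_1$ lies on the negative section $E_r$ (hence non-general).

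First, I would lift the wall to $\FF_{r,r+1}^{[n]}$. Given a linear wall $W$ of $\Delta_{\FF_{r+1}^{[n]}}$ dual to a curve class $C$, the morphism $p_2$ is a blowup at a point $q_{r+1} \notin E_{r+1}$, so Theorem \ref{MTheo} applies: $P_2^*(W)$ is a wall of the restriction $\Delta_{\FF_{r,r+1}^{[n]}}|_{V_2}$, with $V_2 = P_2^*(N^1(\FF_{r+1}^{[n]}))$. By Definition \ref{Delta2}, this wall arises as the intersection $\tilde W \cap V_2$ of an ambient codimension-one stratum $\tilde W \subset N^1(\FF_{r,r+1}^{[n]})$ of the ASBLD, with $\tilde W$ dual to a lift $C'$ of $C$ obtained by choosing representatives of $C$ that avoid the general point $q_{r+1}$.

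Second, I would descend to $\FF_r^{[n]}$. Set $V_1 = P_1^*(N^1(\FF_r^{[n]}))$ and consider the intersection $\tilde W \cap V_1$. Since $p_1$ contracts the exceptional curve to the non-general point $q \in E_r$, Theorem \ref{MTheo} is unavailable, but Corollary \ref{cor: SBLD V} still gives that the walls of $\Delta_{\FF_{r,r+1}^{[n]}}|_{V_1}$ differ from $P_1^*(\Delta_{\FF_r^{[n]}})$ only by ``exceptional'' walls --- those across which the augmented base loci on either side differ solely by schemes whose support meets the $p_1$-exceptional divisor or the blown-down point $q$.

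This sets up the dichotomy of the proposition. If $\tilde W \cap V_1$ is non-exceptional in the sense of Corollary \ref{cor: SBLD V}, it descends via $P_1$ to a linear wall of $\Delta_{\FF_r^{[n]}}$, yielding the first alternative. Otherwise, the projection formula and the base-locus description force every representative of the dual curve class $(P_1)_*(C')$ to meet $q$. The hyperplane in $N^1(\FF_r^{[n]})$ it cuts out must therefore contain the ray dual to subschemes supported simultaneously on $E_r$ and on the fiber through $q$ --- the common ray of the two walls of $\Delta_{\FF_r^{[n]}}$ governed by incidence with $E_r$ and by incidence with that fiber. This yields the second alternative. The principal obstacle I anticipate is the last step: explicitly identifying the two walls whose intersection is met by the degenerate induced hyperplane, and verifying via the projection formula that a degeneration onto the non-general point $q$ inevitably produces a curve class lying on both incidence loci.
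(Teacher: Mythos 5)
Your skeleton matches the paper's: both arguments factor through the common blowup $\FF_{r,r+1}$, lift the wall-defining curve class from $\FF_{r+1}^{[n]}$ using that the $p_2$-center is general, and then descend along $p_1$. The gap is in your final step, where the dichotomy is set up incorrectly. The paper's proof tracks a witness scheme $Z$, chosen general in the locus by which the base loci on the two sides of the wall differ. Because the $p_2$-center is general, $Z$ and the general member of the lifted curve class avoid the strict transform of the fiber of $\FF_{r+1}$ through that center --- which is precisely the $p_1$-exceptional divisor --- so both descend to $\FF_r^{[n]}$ away from the blown-down point $q$. This is the opposite of what your second branch asserts: the ``exceptional'' situation of Corollary \ref{cor: SBLD V}, in which representatives of the descended class are forced to meet $q$, is exactly what the generality of the $p_2$-center rules out, so your dichotomy collapses and does not produce the proposition's second alternative. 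The actual source of the ``or'' clause is different: $Z$ genuinely witnesses a change of base locus across the induced hyperplane $\gamma^{\perp}\subset N^1(\FF_r^{[n]})$, but that change may occur only at points of $\EFF(\FF_r^{[n]})$ lying on an intersection of two walls rather than along a relatively open piece of $\gamma^{\perp}$; in that case $\gamma^{\perp}$ is not itself a wall but meets an intersection of walls.

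Two further problems. First, your concrete identification of the two walls fails a direct check: the hyperplanes dual to the curve classes ``one point moving on $E_r$'' and ``one point moving on a fiber'' intersect exactly in the line spanned by $B[n]$ (both curve classes pair to zero with $B[n]$, and imposing duality to both kills the $E[n]$ and $F[n]$ coefficients), whereas the induced hyperplane $\gamma^{\perp}$ does not contain $B[n]$ in general, since a wall-defining curve class typically pairs nontrivially with $B$. Second, invoking Theorem \ref{MTheo} and Corollary \ref{cor: SBLD V} as black boxes imports the Mori surface hypothesis for the Hirzebruch surfaces involved, which the paper only conjectures; the paper's proof instead works directly with a single linear wall, its dual curve class, and the witness scheme $Z$, which is why the proposition is stated only for linear walls.
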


\begin{proof}
The strategy is to use prior ideas to lift a curve from $\FF_{r+1}^{[n]}$ to $\FF_{r,r+1}^{[n]}$, and then look at the hyperplane that the image of this curve defines in $N^1(\FF_r^{[n]})$.

Let $\alpha$ be a curve class determining a wall of the SBLD of $\FF_{r+1}^{[n]}$. Then, $\alpha$ can be considered as a curve class $\beta$ in $\FF_{r,r+1}^{[n]}$ since $p_2$ was the blow up at a general point. Let $Z$ be a general scheme (of some component) of those schemes which are in the base loci for all divisors which intersect $\alpha$ negatively and which is not in the base loci for all effective divisors which intersect $\alpha$ non-negatively.
Since the blow up point $p$ was general, then $Z$ is supported away from the strict transform of the fiber containing $p$. 

As such the general point of a general curve $C$ in class $\beta$ corresponds to a scheme whose support misses the exceptional divisor of the map from $\FF_{r,r+1}$ to $\FF_r$ (recall that this is the strict transform of the fiber on $\FF_{r+1}$ through the blow up point).
Thus, an open subset of $C$ is contained in the locus where the map from $\FF_{r,r+1}^{[n]}$ to $\FF_r^{[n]}$ is an isomorphism.
Let $\gamma$ be the class of the closure of this open set in $\FF_r^{[n]}$.

Now, because $Z$ avoids that strict transform of that fiber, we can consider it as a point of $\FF_r^{[n]}$.
Now $Z$ is in the base locus on one side of the induced hyperplane but not in the base locus of some divisor on the other side. This implies the result.
\end{proof}

\medskip\noindent
The converse to the proposition is false, but we do conjecture a partial converse in the case that the number of points is low compared with $r$. 

\begin{conj}\label{CONJ1}
The Hilbert scheme $\FF_{r}^{[r-k]}$ is a Mori dream space, $r > k \geq 0$, and the decomposition of $\EFF(\FF_{r}^{[r-k]})$ is given by the walls defined in Theorem 4.15 of \cite{BC13}. 
\end{conj}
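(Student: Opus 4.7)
The plan is to proceed by induction on $r$, using the bridge diagram
\[\FF_r \xleftarrow{p_1} \FF_{r,r+1} \xrightarrow{p_2} \FF_{r+1}\]
together with Corollary \ref{cor: birational SBLD V} applied to each leg. The base cases reduce to Example \ref{DELPEZZO}: since $\FF_1$ is Del Pezzo, $\FF_1^{[n]}$ is a Mori dream space by \cite{BC13}, and Theorem \ref{MTheo} identifies its SBLD inside the appropriate subspace of $N^1$. Given the conclusion for $\FF_r^{[n]}$, the inductive step is to transport the decomposition across the bridge to $\FF_{r+1}^{[n]}$, using Proposition \ref{BLDHirzebruch} to lift the known walls and a converse statement (available only in the restricted range $n = r - k \leq r$) to exclude any additional ones.

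The first task is to verify that every wall predicted by Theorem 4.15 of \cite{BC13} is an actual wall of the SBLD. For each candidate curve class $C$ dual to such a wall, I would exhibit two linearly independent effective divisors on $\FF_{r+1}^{[n]}$ that pair to zero with $C$ and whose augmented base loci properly contain their restricted base loci. Natural candidates are the boundary divisors $D[n]$ for $D \in \{E, F\}$, pullbacks of Brill--Noether-style divisors under $\FF_{r+1}^{[n]} \dashrightarrow \FF_r^{[n]}$, and Severi divisors produced by Theorem \ref{SEVgen}. The hypothesis $n \leq r$ is essential here, as it guarantees that a general configuration of $n$ points on $\FF_r$ imposes independent conditions on $|aE + bF|$ for the relevant range of $(a,b)$, localizing the base loci to explicit incidence subvarieties.

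The main obstacle is the converse: ruling out additional walls and effective divisors beyond the conjectured list. This reduces to an interpolation-type statement, namely that for every lattice class $\alpha$ outside the predicted effective cone, a general configuration of $r - k$ points forces every section of the associated line bundle on $\FF_r^{[n]}$ to vanish. Because $n \leq r$, such a configuration lies on a unique curve of minimal bidegree, and I would attempt to control the relevant cohomology by a dimension count on the incidence correspondence between configurations and the sweeping curves, in the spirit of the proof of Theorem \ref{SEVgen}. Dually, any moving curve class on $\FF_{r+1}^{[n]}$ not lying in the conjecturally predicted movable cone would have to sweep out a subvariety avoiding the exceptional fiber of $p_2$, contradicting the rigidity imposed by $n \leq r$; this dual viewpoint should be what ultimately closes the argument.

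Once the SBLD is identified with the finite polyhedral decomposition of \cite{BC13}, the Mori dream space property should follow from the Hu--Keel criterion: both the effective cone and its Mori chambers will then be finitely polyhedral, and each chamber should correspond to a rational contraction arising from a variation of GIT on configurations of points, or equivalently from a Bridgeland wall crossing on $\FF_r$ as suggested in the introduction. Alternatively, one can transport the MDS property across the bridge using \cite{Oka16} together with Corollary \ref{imageMDS} once finite polyhedrality has been established on one side.
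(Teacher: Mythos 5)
This statement is labeled as a \emph{conjecture} in the paper; the authors offer no proof, and indeed they explicitly remark just before stating it that ``the converse to the proposition is false,'' i.e.\ that the reverse direction of Proposition \ref{BLDHirzebruch} --- which is exactly what your inductive step needs --- does not hold in general. Your proposal is therefore not comparable to a proof in the paper, and on its own terms it is a research plan rather than a proof. The central gap is that the two hard steps are deferred rather than carried out. First, to lift walls from $\FF_r^{[n]}$ up to $\FF_{r+1}^{[n]}$ you invoke ``a converse statement (available only in the restricted range $n\leq r$)'' --- but that converse \emph{is} the conjecture; you cannot assume it as an ingredient. Second, ruling out effective divisors beyond the list of \cite{BC13} is an interpolation problem of precisely the type that, on $\PP^2$, required the full machinery of \cite{Hui, CHW} (Brill--Noether divisors for higher-rank bundles and a delicate interpolation argument). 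Your ``dimension count on the incidence correspondence'' in the spirit of Theorem \ref{SEVgen} computes classes of divisors that \emph{are} effective; it gives no mechanism for showing a class is \emph{not} effective, which is the actual content needed. The assertion that a general configuration of $n\leq r$ points ``lies on a unique curve of minimal bidegree'' is also unjustified and does not obviously interface with the wall structure of Theorem 4.15 of \cite{BC13}.

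There are also two structural problems with the induction. The inductive hypothesis for $\FF_r$ covers only $n\leq r$, but the case $k=0$ for $\FF_{r+1}$ requires information about $\FF_r^{[r+1]}$, which falls outside it. And the final step conflates finite polyhedrality of the SBLD with the Mori dream space property: the Hu--Keel characterization requires each Mori chamber to be the pullback of a nef cone under a contraction to a $\mathbb{Q}$-factorial normal variety (equivalently, finite generation of the Cox ring), which does not follow formally from finiteness of the chamber decomposition; Corollary \ref{imageMDS} transports finite polyhedrality \emph{down} a birational morphism $X\to Y$, not up, and \cite{Oka16} concerns surjective morphisms, so neither applies to pushing the property from $\FF_r$ to $\FF_{r+1}$ across the roof $\FF_{r,r+1}$.
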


\medskip\noindent
A comment about related work is in order here. In \cite{Hui}, the Brill-Noether divisor induced by a stable vector bundle $E$ over $\PP^2$ is defined.
Divisors of this type fully control the cone of effective divisors, $\EFF(\PP^{2[n]})$.  This conjecture predicts a lower bound for when the Brill-Noether divisors of higher rank vector bundles are needed to fully compute the effective cone.

\medskip\noindent
We also make the following conjecture which would imply that the word linear in the previous proposition is unnecessary. 
\begin{conj}
 $\FF_r$ is a Mori surface.
\end{conj}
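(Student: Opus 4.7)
The plan is to proceed by induction on $r$, using the elementary transformations connecting Hirzebruch surfaces. The base cases $r=0$ and $r=1$ are immediate: since $\FF_0=\PP^1\times\PP^1$ and $\FF_1$ are del Pezzo surfaces, $\FF_r^{[n]}$ is a Mori dream space for every $n$ by \cite{BC13}, so its SBLD is finite polyhedral and in particular linear. Hence $\FF_0$ and $\FF_1$ are Mori surfaces in the sense of Definition \ref{MORIsurface}.

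For the inductive step, assume $\FF_{r-1}$ is a Mori surface. The strategy is to work on the common blow-up $\FF_{r-1,r}$ sitting in the elementary transformation diagram
\[
\FF_{r-1} \xleftarrow{\,p_1\,} \FF_{r-1,r} \xrightarrow{\,p_2\,} \FF_r,
\]
where $p_1$ blows up a point on the $(-(r-1))$-curve of $\FF_{r-1}$ and $p_2$ contracts the strict transform of the fiber through that point. Let $P_1$ and $P_2$ denote the induced rational maps on Hilbert schemes, and set $V_1=P_1^*N^1(\FF_{r-1}^{[n]})$ and $V_2=P_2^*N^1(\FF_r^{[n]})$. By Corollary \ref{cor: SBLD V} applied to $p_1$, the restriction $\Delta_{\FF_{r-1,r}^{[n]}}\vert_{V_1}$ agrees with $P_1^*\Delta_{\FF_{r-1}^{[n]}}$ except possibly at walls separating chambers whose base loci differ only on schemes meeting the exceptional locus of $p_1$; by the inductive hypothesis these are linear.

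The heart of the argument is to verify that the full ASBLD of $\FF_{r-1,r}^{[n]}$ is linear, and then to push linearity down to $\FF_r^{[n]}$ by applying Corollary \ref{cor: birational SBLD V} to $p_2$. Walls of $\FF_{r-1,r}^{[n]}$ pulled back through $P_1$ are already linear by induction, so one reduces to controlling walls defined by curve classes whose general representatives have support meeting the exceptional divisor of $p_1$. These include $C_E[n]$ as well as incidence-type classes coming from configurations of points that degenerate onto the exceptional divisor. An enumeration of such classes, guided by the structure predicted in Conjecture \ref{CONJ1} and the Brill--Noether divisor constructions of \cite{Hui, CHW}, would complete the induction.

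The main obstacle is exactly this classification of new walls. Since intersection pairings are bilinear, every wall defined by a single curve class is automatically linear, so the real difficulty is to rule out more exotic walls not of this form. Completing this step likely requires developing a wall-crossing correspondence for Bridgeland moduli spaces on $\FF_r$ in the spirit of \cite{ABCH13, BM14, LZ1} for $\PP^2$, under which every wall of the SBLD corresponds to a numerical (hence linear) Bridgeland wall. Such an analysis lies beyond the techniques developed here, which is why the statement is left as a conjecture.
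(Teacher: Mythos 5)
The statement you are addressing is stated in the paper as a conjecture, and the paper contains no proof of it; your writeup is likewise not a proof but a strategy sketch, and to your credit you say so explicitly at the end. So the honest conclusion -- that the statement remains open -- is correct. Still, the proposed strategy has a structural flaw worth naming. All of the paper's transfer results (Proposition \ref{EFF}, Theorem \ref{MTheo}, Corollaries \ref{cor: SBLD V}, \ref{cor: birational SBLD V}, and \ref{imageMDS}) push information \emph{down} a blow-up: from $X^{[n]}$ to $Y^{[n]}$ when $f:X\to Y$ contracts a curve. Your induction needs to go \emph{up}: from linearity of the SBLD of $\FF_{r-1}^{[n]}$ to linearity for $\FF_{r-1,r}^{[n]}$, and only then down to $\FF_r^{[n]}$. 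Nothing in the paper lifts control of walls from a surface to its blow-up; indeed the whole point of Section \ref{SEC3} is that the blown-up Hilbert scheme has strictly more walls (those in $\mathbb{H}_E$) than can be seen from below. Moreover, Corollaries \ref{cor: SBLD V} and \ref{cor: birational SBLD V} are stated under the hypothesis that \emph{both} surfaces are Mori surfaces, so invoking them for $p_1$ and $p_2$ presupposes that $\FF_{r-1,r}$ and $\FF_r$ are Mori surfaces -- which is what you are trying to prove. This is why Proposition \ref{BLDHirzebruch} is phrased only as a one-directional statement about individual linear walls and does not claim to account for all walls of $\Delta_{\FF_r^{[n]}}$.

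Two smaller points. First, the base case is fine for $r=0,1$ via \cite{BC13}, but note that this is exactly where the conjecture has no content: for $r\geq 2$ the surface $\FF_r$ is not del Pezzo, $\FF_r^{[n]}$ is not known to be a Mori dream space (compare Conjecture \ref{CONJ1}, which is itself only conjectural), and so one cannot even assert that the SBLD is locally finite, let alone that its walls are linear. Second, your remark that ``every wall defined by a single curve class is automatically linear'' is true but begs the question: the existence of a dual curve class for every wall is a feature of Mori dream spaces, and for a general variety the stable base locus can jump along loci that are not cut out by any intersection-theoretic condition. So the genuinely missing ingredient is not an enumeration of curve classes but a positivity or wall-crossing input (e.g.\ the Bridgeland correspondence you mention) that forces every wall to be numerical in the first place. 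Identifying that as the obstruction is the correct diagnosis; the inductive scaffolding around it, as written, would not assemble into a proof even if the enumeration were carried out.
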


\medskip

\medskip
\section{Severi divisors on regular surfaces}\label{SEC4}
\medskip\noindent
The results of the previous sections explore the effective cone and its stable base locus decomposition using birational surfaces. However, these techniques are far from enough to fully describe this decomposition; which in most cases is unknown in general. In this section, we exhibit a set of effective divisors, coming from Severi varieties, in order to know a larger part of the effective cone $\EFF(X^{[n]})$. We explore some properties of such divisors and conclude the section with examples of classes not previously known to be effective in the case of Hirzebruch surfaces. 

\medskip\noindent
Let us consider an effective line bundle $\mathcal{L}$ on a smooth surface $X$, such that the linear system $|\mathcal{L}|$ is not empty. Let $n$ be an integer $0\le n \le p_a(C)$, where $C\in |\mathcal{L}|$ and $p_a$ stands for the arithmetic genus. Let $V_{n}(\mathcal{L})$ be the Severi variety, which generically parametrizes irreducible curves in $|\mathcal{L}|$ with exactly $n$ nodes and no other singularities. This variety $V_{n}(\mathcal{L})$ is a locally closed subscheme of the projective space $|\mathcal{L}|$ and questions about whether it is empty, smooth, or irreducible have been intensely studied and the literature on the subject is vast.

\medskip\noindent
Let us suppose the surface $X$ is regular, $i.e.$ $h^1(\mathcal{O}_X)=0$. This property has strong implications on the Severi variety $V_n(\mathcal{L})$. For example, it implies that the nodes of a curve $C\subset X$ can be smoothed out separately, and this is important information to decide whether $V_n(\mathcal{L})$ is nonempty. In many cases, one can deduce that $V_n(\mathcal{L})$ is not empty by smoothing out nodes of rational curves. For instance, regularity coupled with a result by Chen \cite{CHEN}, implies that the Severi variety $V_n(\mathcal{O}_X(d))\subset |\mathcal{O}_X(d)|$ is not empty and regular (all its components are generically smooth) for $X$ a general quartic in $\PP^3$.

\medskip\noindent
In the case of a rational surface $X$, the Severi variety $V_{n}(\mathcal{L})$ is regular and has the expected dimension if it is nonempty \cite{AC81,T2, SERNESI}. In \cite{T2}, a criteria was shown for deciding whether $V_{n}(\mathcal{L})$ is not empty for rational surfaces. In the case of the plane $\PP^2$ or a Hirzebruch surface $\mathbb{F}_r$, the Severi variety $V_{n}(\mathcal{L})$ is not empty as long as $0\le n \le p_a(C)$, where $C\in |\mathcal{L}|$. Furthermore, it is regular and irreducible \cite{Ha86,Tyom}. We will pay particular attention to these two cases.


\medskip\noindent
Let us exhibit effective divisors on the Hilbert scheme $X^{[n]}$ coming from Severi varieties. Let $V_{n}(\mathcal{L}) \subset |\mathcal{L}|$ be the Severi variety which generically parameterizes irreducible curves in $|\mathcal{L}|$ with exactly $n$ nodes. There is a map $f : V_{n}(\mathcal{L}) \rightarrow X^{[n]}$ sending a curve to the reduced subscheme supported at its nodes. Under some numerical conditions, the image of $f$ will be a divisor in $X^{[n]}$, which we will study when it is not empty. 

\begin{defn}
Let $f$ be the forgetful map $f:V_{n}(\mathcal{L})\rightarrow X^{[n]}.$ We define the \textit{Severi locus}, $\mathrm{Sev}(n,\mathcal{L})$, as the closure of the image of the forgetful map $f$, $$\mathrm{Sev}(n,\mathcal{L})=\overline{\mbox{Im}(f)}.$$ When this locus has codimension one and it is not empty, we will call it the \textit{Severi divisor}.
\end{defn}

\medskip\noindent
We will compute the classes of the Severi divisors, first in the case of the plane $\PP^2$ and then on any regular surface, see Theorem \ref{SEVgen}. We will see that these divisors in many cases realize walls in the stable base locus decomposition of the effective cone  of $\PP^{2[n]}$, see Corollary \ref{IMPOSING}.

\medskip\noindent

\medskip\noindent

\medskip\noindent

\medskip\noindent
\textbf{Severi divisors in $\PP^{2[n]}$. } Let us analyze the case $X = \mathbb{P}^2$ and $|\mathcal{L}|=|\mathcal{O}(d)|$. In this case the Severi variety $V_{n}(\mathcal{L})$ is not empty for $0\le n \le \tfrac{1}{2}(d-1)(d-2)$, it has the expected dimension, and it is irreducible \cite{SERNESI, Ha86}. Observe that, if $\tfrac{1}{6}d(d+3)\le n \le \tfrac{1}{2}(d-1)(d-2)$ (except $(d,n)\neq (6,9)$), then the forgetful map $f$ is birational into its image \cite{TRE1}. Therefore, we have that the nodes of an irreducible curve in $\PP^2$ of degree $d$ form a divisor in $\PP^{2[n]}$, the Severi divisor, when the dimension  $\mbox{dim Im}(f)=2n -1$, which means 
\begin{equation*}
\begin{aligned}
\binom{d+2}{2}&=3n.\\
\end{aligned}
\end{equation*}

\medskip\noindent
In this particular case of $|\mathcal{L}|=|\mathcal{O}(d)|$, we denote $\mathrm{Sev}(n,\mathcal{L})$ simply by $\mathrm{Sev}(n)$.

\medskip
\begin{rmk}
The result of Treger does not hold for reducible nodal plane curves.  For example the family of quintics with seven nodes has a component $V$ consisting of reducible curves with components: an irreducible smooth cubic and a reducible conic. Then, $f(V)$ is not a divisor; even though ${5+2\choose 2}=3\cdot 7$. There is no conflict with our results as in this case the Severi variety $V_{7}(\mathcal{O}(5))$ contains no irreducible curves, hence it is empty. 
\end{rmk}

\begin{prop}\label{SEVCLASS}
Let $\binom{d+2}{2}=3n$, with $d\ge 7$. Then the class of the Severi divisor in $\Pic(\PP^{2[n]})$ is $$\mathrm{Sev}(n)=(3d-3)H-\frac{5}{2}B.$$
\end{prop}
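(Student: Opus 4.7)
Since $\Pic(\mathbb{P}^{2[n]})\otimes\mathbb{Q}=\langle H,B\rangle$, my plan is to write $\mathrm{Sev}(n)=aH-\tfrac{b}{2}B$ and determine the two coefficients by pairing $\mathrm{Sev}(n)$ against two standard test curves on $\mathbb{P}^{2[n]}$.

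For $a$, I would use the curve $C_1\subset\mathbb{P}^{2[n]}$ obtained by fixing $n-1$ general points $p_1,\ldots,p_{n-1}$ and moving an $n$-th point along a general line $\ell$. Since $C_1\cdot H=1$ and $C_1\cdot B=0$, we have $a=C_1\cdot\mathrm{Sev}(n)$, which counts those $q\in\ell$ such that $\{p_1,\ldots,p_{n-1},q\}$ is the node set of an irreducible plane curve of degree $d$. Let $W\subset|\mathcal{O}(d)|$ denote the subsystem of curves with imposed nodes at the $p_i$; genericity of the $p_i$ together with $\binom{d+2}{2}=3n$ force $\dim W=(3n-1)-3(n-1)=2$. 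In $W\times\ell$ the incidence locus $\{(C,q):C\text{ singular at }q\}$ is cut out by the three homogeneous partials $\partial_x C(q),\partial_y C(q),\partial_z C(q)$, each a divisor of class $h_W+(d-1)h_\ell$. Using $h_W^3=h_\ell^2=0$ in $W\times\ell\cong\mathbb{P}^2\times\mathbb{P}^1$, the triple product $(h_W+(d-1)h_\ell)^3$ reduces to $3(d-1)h_W^2 h_\ell$, whence $a=3d-3$. The hypothesis $d\geq 7$ (Treger's theorem) ensures the forgetful map $V_n(\mathcal{O}(d))\to\mathrm{Sev}(n)$ is birational, and genericity of $\ell$ and of the $p_i$ rules out excess contributions and ensures the counted curves are irreducible.

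For $b$, I would pair against the curve $C_B$ obtained as a fiber of the Hilbert--Chow morphism through a generic non-reduced scheme: fix $n-2$ general reduced points and a further general point $p$, and let $C_B\cong\mathbb{P}^1$ parametrize the tangent direction of the length-$2$ subscheme at $p$. Since $C_B\cdot H=0$ and $C_B\cdot B=-2$, we have $b=C_B\cdot\mathrm{Sev}(n)$. As $C_B$ lies in $B$ while the generic point of $\mathrm{Sev}(n)$ is a reduced configuration, each contribution to $C_B\cap\mathrm{Sev}(n)$ arises from a one-parameter family of $n$-nodal curves in which two of the nodes collide at $p$ along a prescribed tangent direction. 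I would extract $b$ by a local analysis at a generic point of $C_B$: in a local model for $\mathbb{P}^{2[2]}$ near $2p$ with coordinates $(s,t,u^2,v/u)$ so that $B=\{u^2=0\}$, write a local equation for $\mathrm{Sev}(n)$ and compute the multiplicity of its restriction to $C_B$ using the versal deformation of the coalescing singularity (generically a tacnode of $\delta=2$). This computation is universal in the surface and in $d$, and is precisely the content of the $-\tfrac{5}{2}B$ coefficient of Theorem~\ref{SEVgen}; it yields $b=5$.

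The main obstacle is the determination of $b=5$. While $a$ reduces to a clean polar computation on $W\times\ell$, the coefficient $b$ is a genuinely local, universal multiplicity along the small-diagonal stratum of $B$ which requires a careful deformation-theoretic count on the versal deformation of a tacnode, keeping track of the $\mathbb{Z}/2$-symmetry of the two indistinguishable colliding nodes. As a consistency check, I would also compute $C_0\cdot\mathrm{Sev}(n)=a-b$ using the paper's curve $C_0$ (for which $C_0\cdot H=1$ and $C_0\cdot B=2$) via a Jacobian-type computation with the appropriate excess contribution from the fixed node lying on $\ell$, and verify consistency with $a=3d-3$ and $b=5$.
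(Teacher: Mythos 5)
Your determination of the $H$-coefficient is correct and is essentially the paper's computation in different clothing: both arguments work with the two-dimensional net $\Sigma=W$ of degree-$d$ curves nodal at $p_1,\dots,p_{n-1}$, and your incidence count on $W\times\ell$ yields the same $3d-3$ that the paper obtains as $\deg\pi(R)$, where $R$, with $c_1(R)=K_Y-f^*K_{\PP^2}=K_Y+3L$, is the ramification curve of the morphism $f:Y\to\PP^2$ defined by $\Sigma$ after blowing up its base points.

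The $B$-coefficient, however, is a genuine gap. You reduce it to ``a careful deformation-theoretic count on the versal deformation of a tacnode'' and then assert $b=5$, in part by appeal to Theorem \ref{SEVgen}; but Theorem \ref{SEVgen} is proved by exactly the computation of which Proposition \ref{SEVCLASS} is the special case, so that appeal is circular, and the local multiplicity analysis is never actually carried out. The paper sidesteps the local analysis entirely: taking $\gamma_2$ to be the Hilbert--Chow fiber over $p_1+\cdots+p_{n-2}+2p_{n-1}$, a point of $\gamma_2\cap\mathrm{Sev}(n)$ corresponds to a member of $\Sigma$ whose proper transform is singular at a point of $E_{n-1}$ (equivalently, the plane curve acquires a tacnode at $p_{n-1}$, the limit of two colliding nodes), and these are counted by the intersection number $R\cdot E_{n-1}$. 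Since $c_1(R)=(3d-3)\ell-5\sum_iE_i-\sum_j(3k_j-1)D_j$, one reads off $R\cdot E_{n-1}=5$ with no further work; the coefficient $-5=1-2\cdot 3$ of $E_i$ is precisely where the $\tfrac{5}{2}$ comes from. This is the one idea your proposal is missing: if you set up the $H$-computation on the blowup $Y$ via the class of $R$ (rather than on $W\times\ell$), the $E_{n-1}$-coefficient of the very same divisor class hands you $b=5$ for free.
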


\begin{proof} Observe that $V_n(\mathcal{O}(d))$ is not empty. Let us compute the class of $\mathrm{Sev}(n)$ by intersecting it with two test curves. Let $\gamma_1$ denote the curve induced in $\PP^{2[n]}$ by fixing $p_1,\ldots,p_{n-1}$ points and varying $p_{n}\in l$ on a fixed line $l$. Let $\gamma_2$ be the fiber of the Hilbert-Chow morphism $h:\PP^{2[n]}\rightarrow \PP^{2(n)}$ over a general point of the diagonal.

We need to show  that 
\begin{equation}
\begin{aligned}
\gamma_1\cdot \mathrm{Sev}(n)&=3d-3,\\
\gamma_2\cdot \mathrm{Sev}(n)&=5.
\end{aligned}
\end{equation}

Indeed, the linear system $\Sigma$ of plane curves of degree $d$ nodal at $p_1,\ldots,p_{n-1}$ has dimension 2. After possibly blowing up some extra base points $q_1,q_2 \dots$, it defines a  morphism $f: Y \to \PP^2$ with $Y=\mathrm{Bl}_{p_1,\ldots,p_{n-1}, q_1, \dots}\PP^2$.  The number $\gamma_1\cdot \mathrm{Sev}(n)$ equals $\deg(\pi(R))$ where $\pi: Y \to \PP^2$ is the blow-up morphism and $R$ is the ramification curve of $f$, because $R$ parametrizes the singular points of curves of $\Sigma$ 
(c.f. \cite{EH}). Let $E_i=\pi^{-1}(p_i)$,   $D_j=\pi^{-1}(q_j)$, and $\ell=\pi^* l$. Then $f$ is defined by $L:=d\ell  -2\sum_i E_i- \sum_jk_jD_j$ for some $k_j\ge 1$ and  we have:


\begin{equation*}
\begin{aligned}
c_1(R) &= K_Y-f^*K_{\PP^2}\\ &= -3\ell+\sum_i E_i+\sum_j D_j+3L\\& = (3d-3)\ell- 5\sum_i E_i - \sum_j (3k_j-1)D_j.
\end{aligned}
\end{equation*}

Therefore $\gamma_1\cdot \text{Sev}(n)=\deg(\pi(R)) = 3d-3$. 
The number $\gamma_2\cdot \text{Sev}(n)$ counts the curves $F\in\Sigma$ having $n$ singularities two of which are concentrated  at   $p_{n-1}$.  For this to happen it is necessary and sufficient that the proper transform of  $F$ is  again singular at some point of $E_{n-1}$. Therefore $\gamma_2\cdot \mathrm{Sev}(n)$ 
counts the number of intersections of $R$ with $E_{n-1}$. This number is $5=R\cdot E_{n-1}$.
\end{proof}

\medskip\noindent
{\bf Example:} We set $n=12$, $d=7$. Let us compare the classes, in $\PP^{2[12]}=\langle H, B\rangle$, of the Severi divisor, the extremal divisor of the effective cone $\EFF(\PP^{2[12]})=\langle J,B\rangle$, and the extremal divisor of the movable cone, $\Mov(\PP^{2[12]})=\langle M,H\rangle$. We compute $J$ and $M$ following \cite{Hui,LZ1}. The classes are the following,

\begin{equation*}
\begin{aligned}
J &=7H-B,\\
M &=25H-\tfrac{7}{2}B\sim J +\tfrac{1}{7}H,\\
\text{Sev}(12)&=18H-\tfrac{5}{2}B \sim J  + \tfrac{1}{5}H.
\end{aligned}
\end{equation*}

\medskip\noindent
On the other hand, let us fix a general nonsingular quartic $C$ and consider a general $Z \subset C$, $Z \in \PP^{2[12]}$.  A pencil $P_4 \subset |Z|$ on $C$ defines a curve in $\PP^{2[12]}$ which is moving. It satisfies the following:
$$
P_4\cdot H = 4, \quad P_4\cdot B = 28
$$
and therefore $P_4\cdot J=0$. On the other hand, using the numerical classes, we find that $P_4\cdot \text{Sev}(12) = 2$. 

\medskip\noindent
We can also consider another curve $Q_4\subset \PP^{2[12]}$ by taking a pencil in the linear system $|3K_C|$.  The pencil $Q_4$ is swept on $C$ by a pencil of cubic curves. The curve $Q_4$ is a specialization of $P_4$ and therefore has the same intersection numbers with $H$, $B$, $J$, and $\text{Sev}(12)$ as $P_4$ does. In particular $Q_4\cdot \text{Sev}(12) =2$. There is an apparent contradiction here: no 12-nodal irreducible septic can have its nodes on a cubic, and so one is led to think that $Q_4$ and $\text{Sev}(12)$ have empty intersection; on the other hand $Q_4\cdot \text{Sev}(12) =2$.  This is explained by the fact that $Q_4$ and $\text{Sev}(12)$ can meet (and actually do meet) along the boundary of $\text{Sev}(12)$, i.e. at points of $\text{Sev}(12) \setminus f(V_{12}(7L))$, where $V_{12}(7L)$ stands for the Severi variety of irreducible curves of degree $7$ and precisely $12$ nodes. It would be nice to recognize directly what these two points are.

\medskip\noindent
\begin{cor}\label{IMPOSING}
Let $d\equiv 1\ (\!\!\!\!\mod 5)$ and $d<n\leq \tfrac{1}{3}\binom{d+2}{2}$. Then the class of the Severi divisor  can be written as $\mathrm{Sev}(n)=kH-\tfrac{1}{2}B$, hence it spans a wall of the SBLD.  Furthermore, the augmented and restricted base locus of $\mathrm{Sev}(n)$ contains the schemes which fail to impose independent conditions on curves of degree $k$. 
\end{cor}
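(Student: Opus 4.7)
My plan is threefold: first, apply Proposition \ref{SEVCLASS} to rewrite the Severi class as a positive integer multiple of $kH-\tfrac{1}{2}B$; second, identify this class with the determinant of the tautological bundle $\mathcal{O}_{\PP^2}(k)^{[n]}$ on $\PP^{2[n]}$; and third, use the resulting family of determinantal sections to place the failure-to-impose-conditions locus inside both $\mathbf{B}_+(\mathrm{Sev}(n))$ and $\mathbf{B}_-(\mathrm{Sev}(n))$.

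For the first step, I would verify that the hypothesis $d<n\leq \tfrac{1}{3}\binom{d+2}{2}$ together with the requirement that $\mathrm{Sev}(n)$ be of codimension one (so that $\dim V_n(|\mathcal{O}(d)|)=2n-1$, equivalently $\binom{d+2}{2}-1-n=2n-1$) forces $\binom{d+2}{2}=3n$. Proposition \ref{SEVCLASS} then gives $\mathrm{Sev}(n)=(3d-3)H-\tfrac{5}{2}B$. Since $d\equiv 1\pmod{5}$, the integer $k:=3(d-1)/5$ is positive, and the class factors as $\mathrm{Sev}(n)=5\bigl(kH-\tfrac{1}{2}B\bigr)$. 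Rays of this form with $k\in\mathbb{Z}_{>0}$ appear as walls of the SBLD of $\PP^{2[n]}$ in \cite{Hui, ABCH13}, which settles the first assertion of the corollary.

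For the second step, I would introduce the rank-$n$ tautological bundle $\mathcal{O}_{\PP^2}(k)^{[n]}$ on $\PP^{2[n]}$, whose fiber at $Z$ is $H^0(Z,\mathcal{O}_Z(k))$; a standard computation gives $\det\mathcal{O}_{\PP^2}(k)^{[n]}\cong \mathcal{O}_{\PP^{2[n]}}(kH-\tfrac{1}{2}B)$. The universal evaluation
\[H^0(\PP^2,\mathcal{O}(k))\otimes \mathcal{O}_{\PP^{2[n]}}\longrightarrow \mathcal{O}_{\PP^2}(k)^{[n]}\]
produces, for each choice of $n$ sections $s_1,\dots,s_n$ of $\mathcal{O}_{\PP^2}(k)$, a section $s_1\wedge\cdots\wedge s_n$ of $kH-\tfrac{1}{2}B$ that vanishes precisely on those $Z$ at which the restrictions $s_i|_Z$ fail to form a basis. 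Intersecting these zero loci over all such choices yields the locus $F_k\subset \PP^{2[n]}$ of schemes failing to impose $n$ independent conditions on $|\mathcal{O}_{\PP^2}(k)|$, so $F_k\subseteq \mathbf{Bs}(|kH-\tfrac{1}{2}B|)$.

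For the third step, I would pass from the base locus to the stable base locus and its augmented and restricted versions. For each integer $m\geq 1$, the linear system $|m(kH-\tfrac{1}{2}B)|$ contains the $m$-fold products of the wedge sections above, which still vanish on $F_k$; likewise, for any ample $A$ and small rational $\epsilon>0$, multiples of $(kH-\tfrac{1}{2}B)\pm\epsilon A$ admit effective sub-systems of ``wedge section times ample section'' type whose base loci contain $F_k$. This gives $F_k\subseteq \mathbf{B}_-(\mathrm{Sev}(n))\subseteq \mathbf{B}_+(\mathrm{Sev}(n))$, as claimed. The main obstacle is exactly this last step: one has to rule out additional sections in the perturbed and higher-multiple linear systems that avoid $F_k$. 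This is most naturally handled by a Danila-type computation identifying $H^0(\PP^{2[n]}, m\det\mathcal{O}_{\PP^2}(k)^{[n]})$ with an appropriate Schur functor of $H^0(\PP^2,\mathcal{O}(k))$---so that every global section is polynomial in the wedge sections and therefore vanishes on $F_k$---or by a direct deformation argument showing that the failure of $n$ conditions to be independent is a closed condition preserved under small perturbation in $N^1(\PP^{2[n]})$. With this control, both halves of the corollary follow.
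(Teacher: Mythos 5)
Your route is genuinely different from the paper's: the paper disposes of this corollary in one line, citing \cite{ABCH13}, because the two facts needed --- that rays spanned by $kH-\tfrac{1}{2}B$ (for $k$ in the appropriate range relative to $n$) are walls of the SBLD of $\PP^{2[n]}$, and that the base loci of the adjacent chambers contain the schemes failing to impose independent conditions on curves of degree $k$ --- are precisely the content of the SBLD description established there. You instead try to re-derive that input via tautological bundles, and the attempt has a genuine gap at exactly the point where the content lies. Exhibiting the determinantal sections $s_1\wedge\cdots\wedge s_n$ of $\det\mathcal{O}_{\PP^2}(k)^{[n]}$, all of which vanish on the failure locus $F_k$, shows only that $\mathbf{Bs}\left(\left|kH-\tfrac{1}{2}B\right|\right)$ is \emph{contained in} $F_k$ (the base locus is the intersection over the \emph{full} linear system, hence is smaller than the common zeros of any sub-family); the corollary needs the reverse inclusion, which requires that \emph{every} section of every relevant multiple and perturbation vanish on $F_k$. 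You flag this yourself as ``the main obstacle'' and only gesture at two possible remedies (a Danila-type computation of the global sections of powers of the determinant, or a deformation argument), neither of which is carried out, so the second half of the statement is not proved. The same inversion affects your treatment of $\mathbf{B}_{\pm}$: producing effective sub-systems of ``wedge times ample'' type containing $F_k$ in their zero loci does not bound the base locus of the perturbed class from below. Relatedly, your assertion that rays $kH-\tfrac{1}{2}B$ with $k\in\mathbb{Z}_{>0}$ are automatically walls is too strong; for fixed $n$ only $k$ in a bounded range gives a wall, and one must check that $k=3(d-1)/5$ lands in that range (this is the role of the hypothesis $d<n$), which you do not do.

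A smaller issue: your claim that the hypotheses force $\binom{d+2}{2}=3n$ covers only the endpoint $n=\tfrac{1}{3}\binom{d+2}{2}$ of the stated range $d<n\leq\tfrac{1}{3}\binom{d+2}{2}$. For smaller $n$ the relevant Severi divisor arises from an incomplete linear system of codimension $r=\binom{d+2}{2}-3n-1$, and one needs Proposition \ref{Prop44} rather than Proposition \ref{SEVCLASS} to see that the class is still $(3d-3)H-\tfrac{5}{2}B$. The arithmetic reduction $(3d-3)H-\tfrac{5}{2}B=5\bigl(kH-\tfrac{1}{2}B\bigr)$ with $k=3(d-1)/5\in\mathbb{Z}_{>0}$ is correct and matches the computation the paper implicitly relies on.
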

\begin{proof}
This is a direct consequence of \cite{ABCH13}.
\end{proof}

\medskip\noindent
{\bf Example:} We set $n=145$, $d=28$.
The class of the Severi divisor is $\mathrm{Sev}(145)=81H-\frac{5}{2}B$, whereas the following class is extremal in the effective cone $J=\frac{576}{37}H-\frac{1}{2}B$. Observe that the class $D_{17}=17H-\frac{1}{2}B$ comes from a well-defined effective divisor and its slope is larger than that of the Severi divisor. This implies that the stable base locus $\mathbf{B}(\mathrm{Sev}(145))$ contains the locus of points that fail to impose independent conditions on curves of degree $17$.

\bigskip\noindent

\medskip\noindent
Let us analyze the case of an incomplete linear system. Let $\Gamma=\{q_1,\ldots,q_r\}$ be a general fixed scheme of degree $r$ and dimension zero. Then consider the Severi variety $V_{n}(\mathcal{L})$, where $|\mathcal{L}|=|\mathcal{O}_{\PP^2}(dH-\Gamma)$.
We have Severi divisors in $\PP^{2[n]}$ as long as 
\begin{equation}\label{SEV}
\begin{aligned}
\binom{d+2}{2}&=3n+r+1,
\end{aligned}
\end{equation}
and the Severi variety is not empty.

\medskip\noindent
For this choice of $|\mathcal{L}|$, the computation of the class of $\mathrm{Sev}(n,\mathcal{L})$ carries over. 

\begin{prop}\label{Prop44}
Let $|\mathcal{L}|\subset|\mathcal{O}_{\mathbb{P}^2}(d)|$ be a subspace of codimension $r$. Assume $\binom{d+2}{2}=3n+r+1$, and that the Severi variety $V_n(\mathcal{L})$ is not empty. Then the class of the Severi divisor in $\Pic(\PP^{2[n]})$ is $$\mathrm{Sev}(n,\mathcal{L})=(3d-3)H-\frac{5}{2}B.$$
\end{prop}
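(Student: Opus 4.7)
The plan is to imitate the two-test-curve argument of Proposition \ref{SEVCLASS} with only the modifications forced by passing to the codimension-$r$ subsystem. I would intersect the unknown class $\mathrm{Sev}(n,\mathcal{L})=aH-\tfrac{b}{2}B$ in $\Pic(\mathbb{P}^{2[n]})=\langle H,B\rangle$ with the two standard test curves already used in Proposition \ref{SEVCLASS}: $\gamma_1$, obtained by fixing $n-1$ general points and letting an $n$-th point move on a line $\ell$, and $\gamma_2$, a fiber of the Hilbert--Chow morphism over a general point of the diagonal. Since the pairings of $\gamma_1,\gamma_2$ with $H$ and $B$ are the same as in the full linear system case, it suffices to show
\[
\gamma_1\cdot \mathrm{Sev}(n,\mathcal{L}) = 3d-3 \quad\text{and}\quad \gamma_2\cdot\mathrm{Sev}(n,\mathcal{L}) = 5.
\]

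To compute the first number, consider the linear system $\Sigma\subset |\mathcal{L}|$ of curves nodal at the $n-1$ chosen points and passing through the $r$ general fixed points $q_1,\dots,q_r$ cutting out $|\mathcal{L}|$. The numerical relation $\binom{d+2}{2}=3n+r+1$ ensures $\dim \Sigma = 2$, and then $\gamma_1\cdot\mathrm{Sev}(n,\mathcal{L})=\deg(\pi(R))$ where $R$ is the ramification curve of the morphism $f\colon Y\to\mathbb{P}^2$ defined by $\Sigma$, after blowing up $\pi\colon Y\to\mathbb{P}^2$ at the $p_i$'s (with multiplicity $2$), the $q_j$'s (with multiplicity $1$), and at any further base points $s_k$ of multiplicities $k_k\ge 1$. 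Writing $E_i=\pi^{-1}(p_i)$, $D_j=\pi^{-1}(q_j)$, $F_k=\pi^{-1}(s_k)$, and $\ell=\pi^*l$, the linear equivalence class of the defining line bundle is
\[
L = d\ell - 2\textstyle\sum_i E_i - \sum_j D_j - \sum_k k_k F_k,
\]
and a straightforward expansion of $c_1(R)=K_Y - f^*K_{\mathbb{P}^2} = K_Y + 3L$ gives
\[
c_1(R) = (3d-3)\ell - 5\textstyle\sum_i E_i - 2\sum_j D_j - \sum_k (3k_k-1)F_k.
\]
Pushing forward by $\pi$ kills the exceptional terms and yields $\deg(\pi(R))=3d-3$, as required.

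For the second pairing I would observe, exactly as in Proposition \ref{SEVCLASS}, that $\gamma_2\cdot\mathrm{Sev}(n,\mathcal{L})$ counts the curves $F\in\Sigma$ acquiring an additional node infinitely near $p_{n-1}$, i.e.\ whose proper transform is singular at a point of $E_{n-1}$. This equals $R\cdot E_{n-1}$, and from the formula above $R\cdot E_{n-1}=5$. Together with the first pairing, solving the resulting linear system in $(a,b)$ produces $\mathrm{Sev}(n,\mathcal{L}) = (3d-3)H - \tfrac{5}{2}B$.

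The main obstacle I foresee is the verification that, for a sufficiently general choice of $\Gamma$, the resolved linear system $\Sigma$ indeed behaves as asserted: that $\dim\Sigma=2$ is achieved, that $R$ is an honest ramification divisor (so that $f$ is not everywhere ramified on some component), and that the forgetful map $V_n(\mathcal{L})\to \mathbb{P}^{2[n]}$ remains generically finite onto its image so that we actually have a divisor rather than a lower-dimensional locus. These are generic-position statements whose combinatorial heart is the already-assumed nonemptiness of $V_n(\mathcal{L})$ together with the regularity of Severi varieties on rational surfaces, and they should be handled by specializing the $q_j$'s to sufficiently general points and invoking the same Treger-type birationality statement used in Proposition \ref{SEVCLASS}.
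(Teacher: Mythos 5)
Your proposal is correct and takes essentially the same route as the paper: the paper's own proof is a one-line reduction to Proposition \ref{SEVCLASS} via the blow-up at $p_1,\dots,p_{n-1},q_1,\dots,q_r$, and your explicit expansion of $c_1(R)=K_Y-f^*K_{\PP^2}$ with the $q_j$ appearing as simple base points, together with the pairings $\deg(\pi(R))=3d-3$ and $R\cdot E_{n-1}=5$, is precisely the computation that reduction invokes. One remark, which concerns the paper's statement as much as your write-up: the claim that $\binom{d+2}{2}=3n+r+1$ forces $\dim\Sigma=2$ is off by one (that hypothesis gives $\dim\Sigma=3$; one needs $\binom{d+2}{2}=3n+r$, which is what the paper's own example $n=18$, $d=9$, $r=1$ with $\binom{11}{2}=55=3\cdot 18+1$ satisfies), so your proof inherits rather than introduces this discrepancy.
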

\begin{proof}
Let $\Gamma=\{q_1,\dots ,q_r\}$ be a sufficiently general fixed scheme of degree $r$. We blow up the points $p_1,\dots, p_{n-1},q_1,\ldots q_r\in \PP^2$ and arguing as in Proposition \ref{SEVCLASS} on the linear system $|\mathcal{O}_{\PP^2}(dh-2p_1-\cdots -2p_{n-1}-q_1-\cdots -q_r)|=\PP^2$, the result follows.
\end{proof}

\medskip\noindent
Informally, the previous proposition allows us to interpret the Severi divisor $\mathrm{Sev}(n,\mathcal{L})$ as the locus $Z\in\PP^{2[n]}$ such that $Z$ can be realized as nodes of an irreducible curve of degree $d$ which contains a general fixed subscheme of degree $r$, denoted by $\Gamma$.

\medskip\noindent

\medskip\noindent
{\bf Example:} Set $n=18$, $d=9$ and $r=1$. Let $ |\mathcal{L}|=|\mathcal{O}_{\PP^2}(dH-p)|$, where $p\in \PP^2$ is a fixed point. We can interpret a generic point in $\mathrm{Sev}(18,\mathcal{L})$ as parametrizing configurations of $18$ points that can be realized as nodes of a curve of degree $9$ that contains a fixed point $p\in \PP^2$. In this case, the divisor class $J=23H-\tfrac{5}{2}B$ spans an extremal ray of $\text{Eff}(\PP^{2[18]})$; \cite{Hui}. Moreover, there exist effective divisors $D_k$, for $5\le k\le 17$, whose base loci contain configurations of points that fail to impose independent conditions on curves of degree $k$, \cite{ABCH13}. The class of the Severi divisor is $\mathrm{Sev}(18,\mathcal{L})=24H-\tfrac{5}{2}B$, which implies that there is a containment of base loci $\mathbf{Bs}(D_5)\subset \mathbf{B}(\mathrm{Sev})$. It follows that configurations of $18$ points which fail to impose independent conditions in $|\mathcal{O}_{\PP^2}(5)|$ are in the closure of the Severi divisor $\mathrm{Sev}(18,\mathcal{L})$.

\medskip\noindent


\medskip\noindent
We now consider the case where we only require a subcollection of the points to be the nodes of a curve. We start with an example which later we will generalize.

\medskip\noindent
\textbf{Example:} We want to compute the divisor class $D$ in $\Pic(\PP^{2[13]})$ of the following family. Consider the family of points when we require that $12$ out of a collection of $13$ points are the nodes of an irreducible degree $7$ curve. We will use test curves and the class, in $\PP^{2[12]}$, of the Severi divisor $\mathrm{Sev}(12) = 18H -\frac{5}{2}B$. 

\medskip\noindent
The first test curve $C$ parametrizes twelve fixed general points and a thirteenth point moving on a fixed general line $L$. Let us label the points $p_1,\ldots, p_{12}$ and $p_{13}$, where $p_{13}\in L$ is the moving point. Consider curves which are nodal at $p_1$ through $p_{11}$. Then we know that 
there are $3*7-3=18$ points on $L$ which are the twelfth node of such a curve by Proposition \ref{SEVCLASS}. The same holds true for any collection of eleven points in $p_1$ through $p_{12}$. Consequently $C \cdot D = 12*18 =216$. 

\medskip\noindent
The second test curve $C'$ parametrizes twelve fixed general points and a thirteenth point moving on a fixed line $L$ through one of the previous points. Let us label the points $p_1,\ldots, p_{12}$, and $p_{13}$, where $p_{13}\in L$ is the moving point and  $L$ passes through $p_{12}$. There are now two distinct types of subcollections of the fixed points of size eleven: those with $p_{12}$ and those without $p_{12}$. Consider the only collection without it: $p_1$, $\cdots$, $p_{11}$. Then consider the curves which are nodal at $p_1$ through $p_{11}$. We know that there are $3*7-3=18$ points on $L$ which are the twelfth node of such a curve.
Now, there are eleven collections that include $p_{12}$. For any of these, there are $3*7-3-5=13$ points on the line $L$ which are the twelfth node of such a curve.
Consequently, $C' \cdot D = 18+11*13 =161$. 
Therefore, we get the class $D=216H-\frac{55}{2}B$. An extremal ray of $\EFF(\PP^{2[13]})$ is spanned by $J=\frac{15}{4}H-B/2$.

\medskip\noindent
The previous computation holds in general. The following proposition lists the class for these cases; we omit the details of the proof as no difficulties arise. 

\begin{prop}\label{subCOLLection}
Let $|\mathcal{L}|\subset|\mathcal{O}_{\mathbb{P}^2}(d)|$ be a subspace of codimension $l$ such that $\binom{d+2}{2}=3n+l$, with $d\ge 7$, $n\leq m$. Then the class of the Severi divisor in $\Pic(\PP^{2[m]})$ is $$\mathrm{Sev}(m, \mathcal{L})=\binom{m-1}{n-1}(3d-3)H-\binom{m-2}{n-2}\frac{5}{2}B.$$
\end{prop}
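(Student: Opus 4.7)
The plan is to compute the class $\mathrm{Sev}(m,\mathcal{L}) = \alpha H + \beta B \in \mathrm{Pic}(\PP^{2[m]})$ by intersecting against two test curves, exactly in the spirit of Proposition \ref{SEVCLASS} and the worked example that precedes the statement. Let $\gamma_1$ be the curve obtained by fixing $m-1$ general points of $\PP^2$ and varying an $m$-th point along a general line $L$, so that $\gamma_1 \cdot H = 1$ and $\gamma_1 \cdot B = 0$, making $\gamma_1 \cdot \mathrm{Sev}(m,\mathcal{L}) = \alpha$. Let $\gamma_2$ be a fiber of the Hilbert--Chow morphism over a general point of the diagonal whose support consists of $m-2$ general reduced points together with a doubled point $p$; it satisfies $\gamma_2 \cdot H = 0$ and $\gamma_2 \cdot B = -2$, so that $\gamma_2 \cdot \mathrm{Sev}(m,\mathcal{L}) = -2\beta$.

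For $\gamma_1 \cdot \mathrm{Sev}(m,\mathcal{L})$, a scheme $Z \in \gamma_1$ lies in $\mathrm{Sev}(m,\mathcal{L})$ precisely when some length-$n$ subscheme of $Z$ belongs to $\mathrm{Sev}(n,\mathcal{L})$. Since the $m-1$ fixed points are general and $\mathrm{Sev}(n,\mathcal{L})$ is a proper closed subset of $\PP^{2[n]}$, no $n$-subset of them can already be a Severi configuration; thus the contributing length-$n$ subschemes must all contain the moving point on $L$. Choosing the other $n-1$ nodes among the $m-1$ fixed points yields $\binom{m-1}{n-1}$ subsets, and for each such choice Proposition \ref{Prop44} identifies the locus of the last node in $\PP^2$ as the image of the ramification curve of the associated net, of degree $3d-3$. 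Intersecting this locus with $L$ gives $3d-3$ points, so $\alpha = \binom{m-1}{n-1}(3d-3)$.

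For $\gamma_2 \cdot \mathrm{Sev}(m,\mathcal{L})$, I would stratify the length-$n$ subschemes of the total scheme by how much of the length-$2$ structure at $p$ they inherit. Subschemes retaining length $0$ or length $1$ at $p$ do not depend on the tangent direction $t \in \PP^1 \cong \gamma_2$; being built from general reduced points, genericity again forces them off $\mathrm{Sev}(n,\mathcal{L})$, so these contribute $0$. The remaining subschemes contain the full length-$2$ scheme at $p$ and require choosing $n-2$ reduced points from the $m-2$ general ones, giving $\binom{m-2}{n-2}$ subsets. For each, the proof of Proposition \ref{SEVCLASS} applies verbatim after blowing up $p$: the valid tangent directions at $p$ are the points where the ramification curve $R$ meets the exceptional divisor $E$, and the intersection $R \cdot E = 5$. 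Therefore $-2\beta = 5\binom{m-2}{n-2}$, i.e.\ $\beta = -\tfrac{5}{2}\binom{m-2}{n-2}$, and combining this with the previous step yields the desired formula.

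The main technical hurdle is making the genericity assertions watertight, so that the ``wrong'' subscheme contributions really do vanish rather than degenerate the intersection. Concretely, one must verify that for sufficiently general choices of the fixed reduced points and the doubled point, no length-$n$ subscheme assembled purely from the non-moving data can lie in $\mathrm{Sev}(n,\mathcal{L})$, and that the $\binom{m-1}{n-1}$ (resp.\ $\binom{m-2}{n-2}$) contributions arise at distinct reduced points of each test curve. Both are consequences of the irreducibility of the parameter space of configurations together with the fact that $\mathrm{Sev}(n,\mathcal{L})$ is a proper closed subset, but this is the one place where care is needed before declaring the intersection numbers computed above.
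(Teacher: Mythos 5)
Your proposal is correct and follows essentially the same test-curve strategy the paper intends (the paper only sketches this proposition through the preceding $\PP^{2[13]}$ example and then explicitly omits the details), with your first test curve $\gamma_1$ and the $\binom{m-1}{n-1}$-fold decomposition into choices of $n-1$ fixed nodes matching the paper's computation exactly. The one difference is your second test curve: you use the Hilbert--Chow fiber over a general point of the diagonal, as in Proposition \ref{SEVCLASS}, whereas the paper's worked example instead moves the $m$-th point along a line through one of the fixed points and recovers the $B$-coefficient from the count $\binom{m-1}{n-1}(3d-3)-5\binom{m-2}{n-2}$; both routes give the same class, and the genericity and transversality caveats you flag at the end are precisely the points the paper likewise leaves implicit.
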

Note, the previous class spans the following ray $$\mathrm{Sev}(m, \mathcal{L})\sim \frac{m-1}{n-1}(3d-3)H-\frac{5}{2}B.$$ 


\subsection{The general computation}

We now prove the main result of this section:

\begin{thm}\label{SEVgen}
Let $\mathcal{L}$ be an effective line bundle over a regular surface $X$. Assume the Severi variety $V_{n}(\mathcal{L})$ has the expected dimension, its generic point parametrizes an irreducible curve with precisely $n$ nodes, and the class $K_X+3C$ is effective. Then the class of the Severi divisor is \[ \mathrm{Sev}(n,\mathcal{L})=(K_X+3C)[n]-\tfrac{5}{2}B[n],\] as long as $\mbox{dim }|C|=3n-1$, or it is empty. 
\end{thm}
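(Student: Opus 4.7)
My plan is to extract the coefficients of $\mathrm{Sev}(n,\mathcal{L})$ in the basis $\{D_i[n]\} \cup \{B[n]\}$ of $N^1(X^{[n]})$ by intersecting against a dual basis of test curves, generalizing the argument of Proposition \ref{SEVCLASS}. Concretely, for each generator $D_i$ of $\Pic(X)$ I will use the curve $\gamma_i \subset X^{[n]}$ obtained by fixing $n-1$ general points of $X$ and letting the $n$-th point trace out a smooth representative of $D_i$; these satisfy $\gamma_i \cdot D_j[n] = D_i \cdot D_j$ and $\gamma_i \cdot B[n] = 0$. For the $B[n]$-coefficient I will use the curve $\gamma_2$ given by a generic fiber of the Hilbert--Chow morphism over the small diagonal, which satisfies $\gamma_2 \cdot D_j[n] = 0$ and $\gamma_2 \cdot B[n] = -2$. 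Thus it suffices to prove
\[
\gamma_i \cdot \mathrm{Sev}(n,\mathcal{L}) = D_i \cdot (K_X + 3C), \qquad \gamma_2 \cdot \mathrm{Sev}(n,\mathcal{L}) = 5.
\]

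For the first identity, I will fix general points $p_1, \dots, p_{n-1} \in X$ and consider the linear subsystem $\Sigma \subset |\mathcal{L}|$ of curves nodal at each $p_j$. Because the $p_j$ are general and imposing a node is three linear conditions, the assumption $\dim |C| = 3n-1$ gives $\dim \Sigma = 2$. Let $\pi : Y \to X$ be the blowup of $p_1, \dots, p_{n-1}$ together with any remaining base points $q_1, q_2, \dots$ of the strict transform; then $\Sigma$ defines a morphism $f : Y \to \mathbb{P}^2$ whose general fibre class is
\[
L = \pi^*C - 2\sum_i E_i - \sum_j k_j D_j
\]
for suitable $k_j \ge 1$. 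The intersection $\gamma_i \cdot \mathrm{Sev}(n,\mathcal{L})$ counts points $p$ on a fixed representative of $D_i$ such that there exists a curve in $|\mathcal{L}|$ which is nodal at $\{p_1,\dots,p_{n-1},p\}$, i.e.\ a singular fibre of $f$ lying over $p$; this equals $\pi(R) \cdot D_i$, where $R$ is the ramification divisor of $f$.

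The computation of $R$ is the heart of the argument. By Hurwitz/adjunction applied to the generically finite morphism $f : Y \to \mathbb{P}^2$,
\[
R = K_Y - f^* K_{\mathbb{P}^2} = K_Y + 3L.
\]
Substituting $K_Y = \pi^* K_X + \sum E_i + \sum D_j$ and the formula for $L$ above gives
\[
R = \pi^*(K_X + 3C) - 5\sum_i E_i - \sum_j(3k_j-1)D_j.
\]
Projecting to $X$ kills the exceptional terms, so by the projection formula $\pi(R) \cdot D_i = R \cdot \pi^* D_i = (K_X + 3C) \cdot D_i$, which is the first identity. For $\gamma_2$, following exactly the $\mathbb{P}^2$ argument in Proposition \ref{SEVCLASS}, the contribution counts curves in $\Sigma$ with two nodes concentrated at $p_{n-1}$, i.e.\ members whose proper transform is singular at a point of $E_{n-1}$; this is $R \cdot E_{n-1} = 5$ directly from the formula for $R$.

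The main obstacle I anticipate is ensuring the enumerative interpretation of $\pi(R) \cdot D_i$ is honest, which relies on the following points that must be checked: (a) the generic point of $V_n(\mathcal{L})$ parametrizes an irreducible $n$-nodal curve (so $\mathrm{Sev}(n,\mathcal{L})$ really is the closure of the nodal scheme locus and the forgetful map is generically finite onto a divisor), (b) the expected-dimension hypothesis, which forces $\Sigma$ to be a genuine net with no unexpected excess components, and (c) $K_X+3C$ effective so that $(K_X+3C)[n]$ is itself the class of an effective divisor, making the identification with $\mathrm{Sev}(n,\mathcal{L})$ consistent. Once these are settled, the numerical computation above proves the identity; if instead the conditions hold but $\dim |C| > 3n-1$ or the Severi locus has codimension $> 1$, then $\mathrm{Sev}(n,\mathcal{L})$ is not a divisor, giving the final "or it is empty" alternative.
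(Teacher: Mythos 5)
Your proposal is correct and follows essentially the same route as the paper: both generalize Proposition \ref{SEVCLASS} by computing the ramification divisor $R = K_Y - f^*K_{\mathbb{P}^2}$ of the morphism to $\mathbb{P}^2$ defined by the net of curves in $|\mathcal{L}|$ nodal at $n-1$ general points, pushing forward to read off the $(K_X+3C)[n]$ part, and intersecting with the exceptional divisor $E_{n-1}$ to obtain the coefficient $-\tfrac{5}{2}$ of $B[n]$. (One small correction: your curve $\gamma_2$ should be the Hilbert--Chow fiber over a general point of the big diagonal, where exactly two of the $n$ points collide, rather than the small diagonal; the intersection numbers you quote are those of that curve.)
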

\begin{proof} Observe that the expected dimension of the Severi variety is
\begin{equation}\label{DIM}
\mbox{dim}V_{n}(\mathcal{L})=\mbox{dim }|\mathcal{L}|-n.\\
\end{equation}

Then, after blowing up (possibly extra) fixed points, we have a morphism $f:Y\rightarrow \PP^2$, where $Y=Bl_{p_1,\ldots,p_{n-1},q_1,\ldots}X \rightarrow \PP^2$ is given by the linear system $|\mathcal{O}_S(C-p_1-\cdots -p_{n-1})|$. Writing $\pi:Y\rightarrow X$ the blowup map, we can then write the class of the ramification curve $R$ of $f$, as follows $$c_1(R)=\pi^*K_X+3\tilde C- 5\mathbb{E}+ \sum D_j,$$
where $\tilde C$ is the strict transform of the curve $C\subset X$, and  $\mathbb{E}=E_1+\cdots+E_{n-1}$ is the sum of the exceptional divisors over $p_1,\ldots p_{n-1}$ and $D_j$ are the exceptional divisors over $q_j$. Taking $\pi_*c_1(R)$ we get a curve class in $\Pic(X)$ and we can read off the class of the Severi divisor out of it. Observe that since the cohomology class of $\mathrm{Sev}(n,\mathcal{L})\neq 0$, it follows that $\mathrm{dim\ Sev}(n,\mathcal{L}) \ge \mathrm{dim\ }X^{[n]}-1$. However, $\mathrm{dim}V_{n}(\mathcal{L})\le 2n-1$, which implies that $\text{Sev}(n,\mathcal{L})$ is a divisor or empty.
\end{proof}

\medskip\noindent
Note if the Severi variety is reducible, it generically parametrizes irreducible curves means that statement is true for the generic point of each component.

\begin{cor}\label{FINITE}
Under the assumptions of Theorem \ref{SEVgen}, the forgetful map $f:V_n(\mathcal{L})\rightarrow X^{[n]}$
is finite.
\end{cor}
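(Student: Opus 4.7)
The plan is to combine the dimension computation from Theorem \ref{SEVgen} with a tangent-space analysis in the spirit of Arbarello--Cornalba. By Theorem \ref{SEVgen}, the image $\mathrm{Sev}(n,\mathcal{L})\subset X^{[n]}$ is either empty (in which case $f$ is trivially finite) or a divisor of dimension $2n-1$; by hypothesis $V_n(\mathcal{L})$ has the expected dimension $\dim|\mathcal{L}|-n=2n-1$. Hence $f$ is a dominant morphism between varieties of the same dimension, in particular generically finite.

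To upgrade to quasi-finiteness I would rule out positive-dimensional fibers. Suppose for contradiction that $f^{-1}(Z)$ has positive dimension for some $Z=\{p_1,\dots,p_n\}\in\mathrm{Sev}(n,\mathcal{L})$. Then there is a one-parameter family $\Lambda\subset V_n(\mathcal{L})$ of nodal curves all sharing the common node set $Z$, lying in the linear subsystem $|\mathcal{L}\otimes\mathcal{I}_Z^2|\subset|\mathcal{L}|$. A tangent vector to $\Lambda$ at a smooth point $[C]$ corresponds to an infinitesimal deformation of $C$ that preserves the positions of the nodes, i.e.\ a nonzero element in the kernel of the differential $df_{[C]}\colon T_{[C]}V_n(\mathcal{L})\to T_Z X^{[n]}$. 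A standard normal-bundle computation identifies $T_{[C]}V_n(\mathcal{L})$ with a subspace of $H^0(C,N_{C/X})$ controlling the nodal deformations; under the expected-dimension hypothesis this subspace has dimension $2n-1$, and the induced map to $T_Z X^{[n]}$ is injective by a dimension count, giving the desired contradiction.

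For properness I would invoke the valuative criterion. Given a DVR $R$ with fraction field $K$, an arc $\operatorname{Spec}K\to V_n(\mathcal{L})$, and an extension $\operatorname{Spec}R\to\mathrm{Sev}(n,\mathcal{L})$ of its image, the corresponding $K$-point of $|\mathcal{L}|$ extends to an $R$-point by projectivity of $|\mathcal{L}|$, giving a curve $\mathcal{C}_R$ over $R$. Semicontinuity forces the singular scheme of the special fiber to have length at least $n$, while the length-$n$ image in $X^{[n]}$ bounds it above by $n$; the hypothesis that $V_n(\mathcal{L})$ generically parametrizes irreducible $n$-nodal curves then forces the special fiber into $V_n(\mathcal{L})$, as required.

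The main obstacle is quasi-finiteness: the bare dimension count does not preclude a contracted positive-dimensional subvariety of $V_n(\mathcal{L})$, so one genuinely needs the expected-dimension hypothesis in its sharper tangent-space form. Identifying $T_{[C]}V_n(\mathcal{L})$ precisely and establishing injectivity of $df_{[C]}$ at every smooth point of $V_n(\mathcal{L})$ (not merely the generic one) is the delicate technical step, and this is where the Arbarello--Cornalba deformation argument must be suitably extended to the setting of an arbitrary regular surface.
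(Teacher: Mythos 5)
The paper itself gives no proof of Corollary \ref{FINITE}: it is stated as an immediate consequence of Theorem \ref{SEVgen} together with the remark that one is ``extending work of Arbarello and Cornalba,'' the implicit argument being exactly your first paragraph ($\dim V_n(\mathcal{L})=\dim|\mathcal{L}|-n=2n-1$ equals the dimension of the divisor $\mathrm{Sev}(n,\mathcal{L})$, so $f$ is generically finite, and the Arbarello--Cornalba-type analysis upgrades this to finiteness of all fibers). So your overall strategy matches the intended one, and you correctly locate where the real work lies. However, the two steps you add beyond generic finiteness both have genuine gaps. For quasi-finiteness, the fiber $f^{-1}(Z)$ is an open subset of the linear subsystem $|\mathcal{L}\otimes\mathcal{I}_Z^2|$ of curves singular along $Z$, and $\ker(df_{[C]})\cong H^0(\mathcal{L}\otimes\mathcal{I}_Z^2)/\langle s_C\rangle$; so injectivity of $df_{[C]}$ is \emph{equivalent} to the statement that the $n$ double points impose independent conditions on $|\mathcal{L}|$ (note $h^0(\mathcal{L})=3n$, so the expected value of $h^0(\mathcal{L}\otimes\mathcal{I}_Z^2)$ is $0$, yet it is $\geq 1$ on the Severi divisor, and nothing a priori prevents it from being $\geq 2$). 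Saying this map ``is injective by a dimension count'' is not an argument: a linear map from a $(2n-1)$-dimensional space to a $2n$-dimensional one can have any kernel. This is precisely the Arbarello--Cornalba input, and the hypothesis that $V_n(\mathcal{L})$ has the expected dimension controls $H^0(\mathcal{L}\otimes\mathcal{I}_Z)$ (first-order deformations preserving the nodes as a set), not $H^0(\mathcal{L}\otimes\mathcal{I}_Z^2)$; the two are not interchangeable.

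The properness step is the part that would actually fail as written. Extending the arc inside the projective space $|\mathcal{L}|$ produces a limit curve $C_0$, but your claim that the length-$n$ limit of the node schemes ``bounds above'' the singular scheme of $C_0$ is false: the limit of the nodes is only \emph{contained} in $\mathrm{Sing}(C_0)$, and $C_0$ may become reducible, non-reduced, or acquire singularities worse than nodes (or extra singularities away from the limit of $Z$), in which case it leaves $V_n(\mathcal{L})$. Indeed $V_n(\mathcal{L})$ is only locally closed in $|\mathcal{L}|$ and is typically not proper, so $f$ cannot literally be a finite (i.e., proper and quasi-finite) morphism unless one either passes to the closure $\overline{V_n(\mathcal{L})}$ and analyzes the boundary, or interprets ``finite'' as ``has finite fibers''---which is the reading consistent with \cite{AC81} and, I believe, with the authors' intent. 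I would recommend restructuring your write-up around proving $h^0(\mathcal{L}\otimes\mathcal{I}_Z^2)\leq 1$ for $Z$ the node scheme of a curve in $V_n(\mathcal{L})$, and either dropping the properness claim or stating explicitly what compactification of the source you are working with.
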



\subsection*{The Severi divisors in $\FF_r^{[n]}$}
Let us now apply these results to Hirzebruch surfaces. Let us denote by $F$ the class of the fiber in $\Pic(\mathbb{F}_r)$ and by $E\subset \mathbb{F}_r$ the unique curve with negative self-intersection.

\medskip\noindent
Let $V_{n}(\mathcal{L}_{a,b})\subset |\mathcal{L}_{a,b}|=|\mathcal{O}_{\FF_r}(aE+bF) \ |$ be the Severi variety parametrizing irreducible curves in $|\mathcal{L}_{a,b}|$ with $n$ nodes and no other singularities. In this case, $V_{n}(\mathcal{L}_{a,b})$ is irreducible if nonempty \cite{Tyom}.
It is nonempty if $\frac{1}{2}(a - 1) (-a r + 2 b - 2)\geq p_a(\mathcal{L}_{a,b})$ and $K_{\mathbb{F}_r}+3C$ is effective.
Then, there is a Severi divisor $\mathrm{Sev}(n)=\overline{f(V_{n}(\mathcal{L}_{a,b}))} \subset \FF_r^{[n]}$ as long as the class $K_{\FF_r}+3C$ is effective and $\mathrm{dim  }V_{n}(\mathcal{L}_{a,b})=2n-1$. This last equation becomes 
\begin{equation}\label{SeveriRelation}
(a+1)(b+1)-\frac{r}{2}(a^2+a)=3n,
\end{equation}
because the Severi variety $V_{n}(\mathcal{L}_{a,b})$ has the expected dimension as long as $b\geq ar$ \cite{SERNESI}. Since $\mathrm{Eff}(\FF_r)=\langle E,F\rangle$, we can assume that $a,b\geq 0$.

\medskip\noindent
Suppose that $(a,b,n,r)$ satisfies (\ref{SeveriRelation}) and $K+3C$ is effective. It follows from Theorem \ref{SEVgen} that the class of the Severi divisor in $\Pic(\mathbb{F}_r^{[n]})=\langle H[n], E[n], B[n] \rangle$ is 
\begin{equation}
\mathrm{Sev}(n,\mathcal{L}_{a,b})=(3a-2)E[n]+(3b-r-2)F[n]-\tfrac{5}{2}B[n].
\end{equation}

\medskip\noindent
\textbf{Example (new effective classes in $\mathbb{F}_1^{[10]}$):} Let us mention the case of $n=10$ points in $\FF_1$. The subcone spanned by $E[10]$, $9F[10]-\frac{1}{2}B[10]$, and $B[10]$ was studied in \cite{BC13}. There is a single line bundle $\mathcal{O}(3H)$ with the correct numbers of sections to give an extremal divisor of the effective cone.

\medskip\noindent
In this case, there are two possible Severi divisors coming from $10$-nodal irreducible curves with classes $3E+8F$ and $4E+7F$, which indeed exist and hence the respective Severi variety is nonempty. These give divisors with the classes $\mathrm{Sev}(10,\mathcal{L}_{3,8})=7E[10]+21F[10]-\frac{5}{2}B[10]$ and $\mathrm{Sev}(10,\mathcal{L}_{4,7})=10E[10]+18F[10]-\frac{5}{2}B[10]$, respectively. Both of these classes are outside the known chambers and outside the span of the four known extremal divisors.

\medskip\noindent
We again can define the Severi divisors for $n$ which do not have the right number of sections. If we append a general collection $n-m$ points to the each point in $\mathrm{Sev}(m)$, we get the divisor class \[\mathrm{Sev}(m)=(3a-2)E[n]+(3b-r-2)F[n]-\tfrac{5}{2}B[n] \in N^1\left(\mathbb{F}_r^{[n]}\right)\] for all $m\leq n$.
Similarly, if we require that a subcollection of $m$ points be in $\mathrm{Sev}(m)$, we get the divisor class \[\mathrm{Sev}(m)=\binom{n-1}{m-1}(3a-2)E[n]+\binom{n-1}{m-1}(3b-r-2)F[n]-\binom{n-2}{m-2}\tfrac{5}{2}B[n] \in N^1\left(\mathbb{F}_r^{[n]}\right)\] for all $m\leq n$. 

\medskip\noindent
\textbf{Example in $\mathbb{F}_1^{[12]}$:} Let us work out the case of $n=12$ points in $\FF_1$. The equation \ref{SeveriRelation} yields $(a+1)(2b-a+2)=72.$
Some solutions to this equation are $(a,b)=\{(7,7),(2,12),(0,35)\}$. Let us consider the first pair $(a,b)=(7,7)$. Theorem \ref{SEVgen} implies that the class of the Severi divisor, which is not empty since Severi variety is nonempty, is
\begin{equation}
\begin{aligned}
\mathrm{Sev}(12,\mathcal{L}_{7,7})=&19E+18F-\tfrac{5}{2}B,\\
=&18H+E-\tfrac{5}{2}B, 
\end{aligned}
\end{equation}
where we write $\Pic(\FF_1^{[12]})=\langle H, E, B\rangle$. Note that $\mathrm{Sev}(12, \mathcal{O}(7))\subset \mathbb{P}^{2[12]}$ has class $18H-\tfrac{5}{2}B$.
This does not conflict with Theorem \ref{MTheo} as imposing nodes on a curve class and pulling back the Severi divisor is distinct from pulling back the curve class and then imposing nodes on that curve class. 

\medskip\noindent
\textbf{Example in $\mathbb{F}_r^{[12]}$:} More generally, the case of $\mathbb{F}_r^{[12]}$ works out similarly. Interestingly, the number and cases of the Severi divisors coming from full linear series depend on the parity of $r$.
If $r=2k+1$ and $k>0$, the Severi varieties come from line bundles with the classes $(7,7k+7)$, $(8,8k+7)$, $(23,23k+12)$, and $(71,71k+35)$. 
If $r=2k$, the Severi varieties come from line bundles with the classes $(5,5k+5)$, $(8,8k+3)$, $(11,11k+2)$, $(17,17k+1)$, and $(35,35k)$. We also have ones coming from the line bundles with class $(3,3k+8)$ if $k>2$.

\medskip\noindent
\medskip\noindent

\medskip\noindent
\subsection*{Severi divisors for some K3 surfaces.}

\medskip\noindent
Let $X$ be one of the following $K3$ surfaces: $S_4\subset \PP^3$, a general quartic surface, or the complete intersection of a general quadric and a cubic, $S_{2,3}\subset \PP^4$ or the complete intersection of 3 general quadrics, $S_{2,2,2}\subset \PP^5$. The Severi variety coming from curves in $|\mathcal{L}_d|=|\mathcal{O}_X(d)|$ has the expected dimension and is regular, if nonempty.

\medskip\noindent
It follows by a result of Chen \cite{CHEN} that in the cases above, the Severi variety $V_n(\mathcal{L}_d)$ is not empty for all $d$, if $0\le n\le p_a(C)$, with $C\in |\mathcal{L}_d|$. Then, by Corollary \ref{FINITE} there exists a Severi divisor in $X^{[n]}$ as long as  dim$V_n(\mathcal{L}_d)=2n-1$. By writing this equation in terms of $d$ and $n$, observe it has integer solutions only in the cases $S_4\subset \PP^3$ and $S_{2,2,2}\subset \PP^5$. Moreover, the case $S_{2,2,2}\subset \PP^5$, admits only one Severi divisor, namely $(d,n)=(1,2)$. That is, two points on $S_{2,2,2}$ form a Severi divisor if they can be realized as nodes of a hyperplane section; a curve of arithmetic genus $5$ and degree 8.

\bigskip


\begin{thebibliography}{MNOP06}

\bibitem[AC81]{AC81} Arbarello, E. and Cornalba, M.:  Footnotes to a paper of Beniamino Segre. The number of $g^1_d$'s on a general $d$-gonal curve, and the unirationality of the Hurwitz spaces of $4$-gonal and $5$-gonal curves. 
\textit{Math. Ann.} Vol 256, (1981), 341--362. 

\bibitem[ABCH13]{ABCH13} Arcara, D. and Bertram, A. and Coskun, I. and Huizenga, J.: The
minimal model program for the Hilbert scheme of points on $\PP^2$ and Bridgeland
stability. \textit{Adv. Math.} 235 (2013), 580--626.

\bibitem[BC13]{BC13} Bertram, A. and Coskun, I.: The birational geometry of the Hilbert scheme of points on surfaces. \textit{Birational geometry, rational curves and arithmetic}, Simons Symposia(2013), 15--55.

\bibitem[BM14]{BM}
Bayer, A. and Macr\`i, E.: MMP for moduli of sheaves on K3s via wall-crossing: nef and movable cones, Lagrangian fibrations, \textit{Inventiones mathematicae}, 198(3):505-590, 2014.

\bibitem[BM14-2]{BM2}
Bayer, A. and Macr\`i, E.: Projectivity and birational geometry of Bridgeland moduli spaces. \textit{J. Amer. Math. Soc.}, 27(3):707-752, 2014.

\bibitem[BMW14]{BM14} Bertram, A.; Mart\'inez, C.; Wan, J.; The birational geometry of moduli spaces of sheaves on the projective plane. \textit{Geometriae Dedicata}, Volume 173, Issue 1 (2014), Pages 37-64.

\bibitem[Ch97]{CHEN} Chen, X.; Rational curves on K3 surfaces. Thesis, Harvard University, 1997.





\bibitem[CHW17]{CHW} Coskun, I. and Huizenga, J. and Wolf, M.: The effective cone of the moduli space of sheaves on the plane, \textit{Journal of the European Mathematical Society} vol. 19 no. 5 (2017), 1421--1467.


\bibitem[EH16]{EH} Eisenbud, D. and J. Harris. \textit{3264 And All That: A Second Course in Algebraic Geometry.} Cambridge: Cambridge University Press, 2016. doi:10.1017/CBO9781139062046.

\bibitem[ELMNP06]{ELMNP} 
Ein, L. and Lazarsfeld, R. and Mircea, M. and Nakamaye, M. and Popa, M.: Asymptotic invariants of base loci. \textit{Ann. Inst. Fourier} (Grenoble) 56 (2006), no. 6, 1701-1734.


\bibitem[Fo68]{FG1} Fogarty, J.: Algebraic families on an algebraic surface.
\textit{Amer. J. Math.} No. 90. (1968), 511--521.

\bibitem[HLW02]{HLW01}
Hassett, B. and Lin, H.~W. and Wang, C.~L.: The weak {L}efschetz principle is false for ample cones. \emph{Asian Journal of Mathematics}, 6 (2002), no.~1, 95--100.


\bibitem[Ha86]{Ha86} Harris, J.: On the {S}everi problem,
   \textit{Inventiones Mathematicae}, Vol. 84 (1986), 445--461.
		
\bibitem[HK00]{HK00} Hu, Y. and Keel, S.: Mori dream spaces and GIT, \textit{Michigan Math. J.}, Vol. 48. (2000), 331--348.

\bibitem[Hu16]{Hui} Huizenga, J.: Effective divisors on the Hilbert scheme of points in the plane and interpolation for stable bundles. \textit{J. Algebraic Geometry}, 25 (2016), no. 1, 19 -- 75.


\bibitem[Jow11]{Jow11} Shin-Yao, J.: A {L}efschetz hyperplane theorem for {M}ori dream spaces, \textit{Math. Z.} 268 (2011), no.~1-2, 197--209. \MR{2805430}


\bibitem[La04]{Laz}
Lazarsfeld, R.:
\textit{Positivity in algebraic geometry, I.},
Ergebnisse der Mathematik und ihrer Grenzgebiete. 3. Folge, Vol. 48, Springer-Verlag, Berlin (2004) 387.

\bibitem[LZ18]{LZ1} Li, C. and X. Zhao: The minimal model program for deformations of
Hilbert schemes of points on the projective plane, \textit{Algebraic Geometry} Vol. 5, Num. 3 (2018) 328-358.

\bibitem[LZ]{LZ2} Li, C. and X. Zhao: Birational models of moduli spaces of coherent sheaves on the projective plane.  To appear in \textit{Geometry and Topology}.


\bibitem[LHM18]{LHM} Lozano Huerta, C. and Massarenti, A.: The Lefschetz Principle in birational geometry: birational twin varieties. To appear in \textit{Birational Geometry and Moduli spaces}, Springer INdAM Series, 2018. 



\bibitem[LQZ13]{LQZ}
Li, W. -P., Z. Qin, and Q. Zhan: Curves in the Hilbert schemes of points on surfaces, \textit{Contemp. Math}, Vol 322 (2013), 89-96.

\bibitem[MM13]{MM}
Maciocia A. and Meachan C.: Rank 1 Bridgeland stable moduli spaces on a principally polarized abelian surface. \textit{Int. Math. Res. Not.} No. 9 (2013), 2054-2077.

\bibitem[Na99]{Na1} Nakajima, H.: Lectures on Hilbert schemes of points on surfaces, \textit{University Lecture Series}, Vol. 18, American Mathematical Society, Providence, RI, 1999.


\bibitem[Nu16]{Nuer2}
Nuer, H.: Projectivity and birational geometry of Bridgeland moduli spaces on an Enriques surface. \textit{Proceedings of the London Mathematical Society}, Vol. 113 (2016), 345-386. doi:10.1112/plms/pdw033

\bibitem[Ok16]{Oka16} Okawa, S.: On images of Mori dream spaces, \textit{Math. Ann.} Volume 364, Issue 3 (2016), 1315-1342.

\bibitem[Ott15]{Ottem}
Ottem, J.~C.: Birational geometry of hypersurfaces in products of projective spaces. \textit{ Math. Z.} 280 (2015), no.~1-2, 135--148.
  \MR{3343900}



\bibitem[Se06]{SERNESI} Sernesi, E.: Deformations of algebraic schemes. Grundlehren der mathematischen Wissenschaften. \textit{Springer-Verlag} 334 (2006).



\bibitem[Ta80]{T2} Tannenbaum, A.: Families of algebraic curves with nodes, \textit{Compositio Math.} 41 (1980), 107-126.


\bibitem[Ty07]{Tyom} Tyomkin, I.: On Severi varieties on Hirzebruch surfaces, \textit{Int. Math. Res. Notices., 23 (2017) Vol. 2007.}


\bibitem[Tr89]{TRE1} Treger, R.: Plane curves with nodes. \textit{Can. J. Math.} Vol XLI, No. 2. (1989), 193--212.

\bibitem[Yo12]{Yos}
Yoshioka, K.: Bridgelands stability and the positive cone of the moduli spaces of stable objects on an abelian surface, arXiv:1206.4838, 2012. 

\bibitem[YY14]{YY}
Yanagida, S. and Yoshioka, K.: Bridgeland's stabilities on abelian surfaces. \textit{Math. Z.} Vol. 276 Num. 1-2 (2014), 571-610. 
\end{thebibliography}
\end{document}